\documentclass{amsart}

\textwidth7in 

\oddsidemargin-.2in 
\evensidemargin-.2in 

\usepackage{graphicx}
\usepackage[mathscr]{eucal}
\usepackage{amsfonts}
\usepackage{enumerate}
\usepackage{mytab}
\usepackage{multicol}
\usepackage{array}
\usepackage[lofdepth,lotdepth]{subfig}

\theoremstyle{plain}
\newtheorem{thm}{Donotwrite}[section]

\newtheorem{definition}[thm]{Definition}
\newtheorem{theorem}[thm]{Theorem}
\newtheorem{proposition}[thm]{Proposition}
\newtheorem{lemma}[thm]{Lemma}
\newtheorem{corollary}[thm]{Corollary}
\newtheorem{conjecture}[thm]{Conjecture}

\theoremstyle{definition}
\newtheorem{example}[thm]{Example}

\newtheorem{remark}[thm]{Remark}

\numberwithin{equation}{section}


\newcommand{\nc}{\newcommand}

\nc{\op}{\oplus} \nc{\pv}{P^{\vee}}

\nc{\B}{\mathbf{B}} \nc{\V}{\mathbf{V}} 
\nc{\nbinom}[2]{\genfrac{}{}{0pt}{1}{{#1}}{{#2}}}
\nc{\qbinom}[2]{\left[\genfrac{}{}{0pt}{1}{{#1}}{{#2}}\right]}
\nc{\ft}{\tilde{f}} 
\nc{\et}{\tilde{e}} 
\nc{\Y}{\mathbf{Y}}
\nc{\T}{\mathbf{T}}
\nc{\ra}{\rightarrow} 
\nc{\vep}{\varepsilon} 
\nc{\vp}{\varphi}
\nc{\h}{\mathfrak{h}} 
\nc{\oP}{\overline{P}}
\nc{\Fit}{\tilde{F}_i} 
\nc{\Eit}{\tilde{E}_i}
\nc{\fit}{\tilde{f}_i} 
\nc{\eit}{\tilde{e}_i}
\nc{\mf}{\mathfrak}
\nc{\ds}{\displaystyle}
\nc{\oa}{a'}
\nc{\ob}{b'}
\nc{\mc}{\mathcal}
\nc{\bsx}{\boldsymbol{x}}
\nc{\imin}{i_{min}}
\nc{\imax}{i_{max}}

\nc{\dmin}{d_{min}}
\nc{\dmax}{d_{max}}

\allowdisplaybreaks

%
%
\begin{document}

\title{On multiplicities of maximal weights of $\widehat{sl}(n)$-modules
}


\author{Rebecca L. Jayne}
\address{Hampden-Sydney College, Hampden-Sydney, VA 23943}
\email{rjayne@hsc.edu} 

\author{Kailash C. Misra}
\address{Department of Mathematics, North Carolina State University,  Raleigh,  NC 27695-8205}
\email{misra@ncsu.edu}
\thanks{Partially supported by NSA grants, H98230-08-1-0080 and H98230-12-1-0248.}

\subjclass[2010]{Primary 17B65,17B67; Secondary 05E10}

%

\maketitle

\begin{abstract}
We determine explicitly the maximal dominant weights for the integrable highest weight $\widehat{sl}(n)$-modules $V((k-1)\Lambda_0 + \Lambda_s)$, $0 \leq s \leq n-1$, $ k \geq 2$.   We give a conjecture for the number of maximal dominant weights of $V(k\Lambda_0)$ and prove it  in some low rank cases.  We give an explicit formula in terms of lattice paths for the multiplicities of a family of maximal dominant weights of  $V(k\Lambda_0)$.    We conjecture that these multiplicities are equal to the number of certain pattern avoiding permutations. We prove that the conjecture holds for $k=2$ and give computational evidence for the validity of this conjecture for $k >2$. 

\end{abstract}
\section{Introduction} \label{intro}

We consider the affine Kac-Moody algebra $\widehat{sl}(n)$.  Let $P^+$ denote the set of dominant integral weights and for $\Lambda \in P^+$, let $V(\Lambda)$ denote the integrable highest weight  $\widehat{sl}(n)$-module.  Let $P(\Lambda)$ denote the set of weights of $V(\Lambda)$ and $\delta$ denote the null root. A weight $\mu \in P(\Lambda)$ is maximal if $\mu + \delta \not\in P(\Lambda)$. Let max$(\Lambda)$ denote the set of maximal weights in $P(\Lambda)$. It is known (see \cite{Kac}) that the weights in $P(\Lambda)$ are $\delta$ shifts of maximal weights. Furthermore, any weight in $P(\Lambda)$ is Weyl group conjugate to a dominant weight in $P(\Lambda) \cap P^+$. 
Hence to determine the set of weights $P(\Lambda)$ it is sufficient to obtain explicitly the set of maximal dominant weights max$(\Lambda) \cap P^+$.  It is known that this is a finite set (see \cite{Kac}).  However, neither the explicit descriptions nor the multiplicities of these weights are known in general. 

In \cite{BFS} a non-recursive criterion is given to decide whether a weight is in $P(\Lambda)$. Also, a combinatorial algorithm is given to obtain these weights. However, obtaining the set of weights max$(\Lambda) \cap P^+$ explicitly for arbitrary rank and arbitrary level using the algorithm given in \cite{BFS} is difficult.  In \cite{Tsu}, Tsuchioka determined explicitly the maximal dominant weights of the $\widehat{sl}(p)$-modules  $V(\Lambda_0 + \Lambda_s)$ for any prime $p$. One of the goals in this paper is to give explicit descriptions of the maximal dominant weights of the $\widehat{sl}(n)$-modules   $V((k-1)\Lambda_0 + \Lambda_s)$, $0 \leq s \leq n-1$, $k \geq 2$ (Theorem \ref{WtTh}). Our approach is rather simple and different from \cite{BFS}. We also conjecture a closed form formula for the number of maximal dominant weights of $V(k\Lambda_0)$ and prove this conjecture for $k \leq 3$.

Determining the multiplicities of the weights of $V(\Lambda)$ is an important problem. 
In \cite{Tsu}, Tsuchioka showed that the multiplicities of the maximal dominant weights of $V(2\Lambda_0)$ are given by the Catalan numbers. The second goal of this paper is to study the multiplicities of the maximal dominant weights of $V(k\Lambda_0)$ using the extended Young diagram realizations of the crystal bases for $V(k\Lambda_0)$, given in \cite{JMMO}.  In particular, we give an explicit formula in terms of lattice paths to determine the multiplicities of a large family of maximal dominant weights of $V(k\Lambda_0)$ (Theorem \ref{PathsTh}).  We conjecture that these multiplicities can be given by certain pattern avoiding permutations. Using the bijection given in \cite{BJS}, we show that this conjecture holds for $k=2$, recovering the result in \cite{Tsu} from a different viewpoint.  We also give multiplicity tables as evidence for the validity of our conjecture when $k > 2$.


%

%

%


\section{Preliminary} \label{prelim}

Let $\mathfrak{g}= \widehat{sl}(n)$ be the affine Kac-Moody Lie algebra with Cartan datum $\{A, \Pi,\Pi^\vee, P, P^\vee\}$ and index set $I = \{0, 1, \ldots n-1\}$.  Here $A = (a_{ij})^{n-1}_{i,j=0}$ is the generalized Cartan matrix where $a_{ii} = 2, a_{ij} = -1$  for $ |i-j| = 1, a_{0,n-1} = a_{n-1,0} = -1,$ and $a_{ij} = 0$ otherwise.  The sets $\Pi = \{ \alpha_0, \alpha_1, \ldots, \alpha_{n-1}\}$ and $\Pi^{\vee} = \{h_0, h_1, \ldots, h_{n-1}\}$ are the simple roots and simple coroots, respectively.  Note that $\alpha_j(h_i) = a_{ij}$ and that $Q = \mathbb{Z} \alpha_{0} \oplus \mathbb{Z} \alpha_{1} \oplus \ldots \mathbb{Z} \alpha_{n-1}$ is the root lattice.   The weight lattice and coweight lattice are $P = \mathbb{Z} \Lambda_{0} \oplus \mathbb{Z} \Lambda_{1} \oplus \ldots \oplus \mathbb{Z} \Lambda_{n-1} \oplus \mathbb{Z}\delta$ and $P^\vee = \mathbb{Z}h_{0} \oplus \mathbb{Z} h_{1} \oplus \ldots \oplus \mathbb{Z} h_{n-1} \oplus \mathbb{Z}d,$ respectively, where $\Lambda_i$, $i \in I$,  defined by $\Lambda_{i}(h_{j}) = \delta_{ij}, \Lambda_{i}(d) = 0$ for all $j \in I$, are the fundamental weights, $\delta = \alpha_0 + \alpha_1 + \cdots \alpha_{n-1}$ is the null root, and $d$ is a degree derivation. The Cartan subalgebra of $\mf{g}$ is $\mf{h} = \text{span}_{\mathbb{C}}\{h_0, h_1, \ldots, h_{n-1},d\}$.  Note that $P \subset \mf{h}^{*}$ and $P^\vee \subset \mf{h}$.  Let  $(\ | \ ) \colon \mf{g} \otimes \mf{g} \longrightarrow \mathbb{C}$ denote the nondegenerate symmetric bilinear form (see \cite{Kac}) on $\mf{g}$.  We denote the induced form on $\mf{h}^*$ by the same notation $(\ | \ )$.  It is known that $\mf{g} \cong \mathring{\mf{g}}\otimes \mathbb{C} [t,t^{-1}]\oplus \mathbb{C} c\oplus \mathbb{C} d,$ where $\mathring{\mf{g}} = sl(n)$ is the simple Lie algebra of $n \times n$ trace zero matrices, $c = h_0 + h_1 + \ldots + h_{n-1}$ is the canonical central element, and $d = 1 \otimes t \frac{d}{dt}$ is the degree derivation.  The Cartan subalgebra of $\mathring{\mf{g}}$ is $\mathring{\mf{h}} = \text{span}_{\mathbb{C}} \{h_1, h_2, \ldots, h_{n-1}\}$.  The submatrix $\mathring{A} = (a_{ij})^{n-1}_{i,j=1}$ is the Cartan matrix for $\mathring{\mf{g}}$.  We define $\mathring{\mathfrak{h}}_{\mathbb{R}} = \text{span}_{\mathbb{R}}\{h_1, h_2, \ldots , h_{n-1} \} \subset \mf{h}$ and hence $\mathring{\mf{h}}_\mathbb{R}^* \subset \mf{h}^*$.

A weight $\Lambda \in P$ is of level $k$ if $\Lambda(c) = k$.  The set $P^+ =  \{ \lambda \in P \mid \lambda({h_i}) \geq 0 \text{ for all }  i \in I \}$ is the set of dominant integral weights.  For any $\Lambda \in P^+$, we denote by $V(\Lambda)$ the integrable highest weight $\mf{g}$-module of level $k = \Lambda(c)$.  For $\mu \in \mf{h}^*$ to be a weight of $V(\Lambda)$, we have $V(\Lambda)_{\mu} = \{v \in V(\Lambda) \mid h(v) = \mu (h)v,   \text{ for all } h \in \mathfrak{h} \} \not = 0$.  Any weight $\mu$ of $V(\Lambda)$ is of the form $\mu = \Lambda - \sum_{i=0}^{n-1}m_i\alpha_i$, where $m_i$ is a nonnegative integer for all $i \in I$.  The dimension of the $\mu$-weight space $V(\Lambda)_\mu$ is called the multiplicity of $\mu$ in $V(\Lambda)$, denoted by mult$_\Lambda(\mu)$.  A weight $\mu$ of $V(\Lambda)$ is a maximal weight if $\mu + \delta$ is not a  weight of $V(\Lambda)$.  We denote  the set of all maximal weights of $V(\Lambda)$ by max$(\Lambda)$.  Hence, max$(\Lambda) \cap P^+$ is the set of all maximal dominant weights of $V(\Lambda)$.  We define the orthogonal projection $\ \bar{} : \mathfrak{h}^{*} \to \mathring{\mathfrak{h}}^{*}$ by $\lambda \mapsto \overline{\lambda} = \lambda - \lambda(c)\Lambda_{0} - (\lambda | \Lambda_{0})\delta$ (\cite{Kac}, Equation 6.2.7) and denote $\overline{Q}$ to be the orthogonal projection of $Q$ on $\mathring{\mf{h}}^*$.  Note that $\theta = \alpha_1 + \alpha_2 + \ldots + \alpha_{n-1}$ is the highest root of $\mathring{\mf{g}}$.  We define $kC_{af} = \{ \lambda \in \mathring{\mathfrak{h}}^{*}_{\mathbb{R}} \mid \lambda(h_{i}) \geq 0, (\lambda | \theta) \leq k \}$.  Then we have the following proposition.

\begin{proposition}\label{KacTh} (\cite{Kac}, Proposition 12.6) The map $\lambda \mapsto \overline{\lambda}$ is a bijection from $\max(\Lambda) \cap P^{+}$ to $kC_{af} \cap (\overline{\Lambda} + \overline{Q}),$ where $k$ is the level of $\Lambda$. In particular, the set $\max(\Lambda) \cap P^{+}$  is finite.
\end{proposition}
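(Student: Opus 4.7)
The plan is to establish well-definedness, injectivity, and surjectivity of the map $\lambda \mapsto \overline{\lambda}$ in turn, and then to deduce finiteness from compactness of $kC_{af}$. For well-definedness, given $\lambda \in \max(\Lambda) \cap P^+$, I would verify the two defining inequalities of $kC_{af}$: the condition $\overline{\lambda}(h_i) = \lambda(h_i) \geq 0$ for $i = 1, \dots, n-1$ is immediate from $\lambda \in P^+$, while $(\overline{\lambda} \mid \theta) \leq k$ follows from $\lambda(h_0) \geq 0$ by writing $h_0 = c - \theta^\vee$ and using $\lambda(c) = k$. The inclusion $\overline{\lambda} \in \overline{\Lambda} + \overline{Q}$ holds because $\lambda \in \Lambda - Q^+ \subset \Lambda + Q$.

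For injectivity, the kernel of the projection on $\mathfrak{h}^*$ is $\mathbb{C}\Lambda_0 + \mathbb{C}\delta$, so two level-$k$ integral weights with the same image differ by an element of $\mathbb{Z}\delta$. If $\lambda_2 = \lambda_1 + m\delta$ with $m \geq 1$, iterating the standard fact (noted in the introduction and proved in \cite{Kac}) that every weight of $V(\Lambda)$ is a nonnegative $\delta$-shift of a maximal weight forces $\lambda_1 + \delta \in P(\Lambda)$, contradicting the maximality of $\lambda_1$.

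For surjectivity, given $\nu \in kC_{af} \cap (\overline{\Lambda} + \overline{Q})$, I would lift it to $\lambda_m := \nu + k\Lambda_0 + m\delta \in P$ for $m \in \mathbb{Z}$. Each $\lambda_m$ has level $k$, satisfies $\overline{\lambda_m} = \nu$, and lies in $P^+$ (the $h_0$ condition uses $(\nu \mid \theta) \leq k$). Writing $\Lambda = \overline{\Lambda} + k\Lambda_0 + (\Lambda \mid \Lambda_0)\delta$ and invoking $\nu - \overline{\Lambda} \in \overline{Q}$, one verifies $\lambda_m - \Lambda \in Q$ for every $m \in \mathbb{Z}$. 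Since $\delta = \alpha_0 + \alpha_1 + \cdots + \alpha_{n-1}$ has strictly positive coefficients in every simple root, for $m$ sufficiently negative one has $\Lambda - \lambda_m \in Q^+$, and by the standard description of weights of integrable highest-weight modules $\lambda_m$ is then a weight. Letting $m^*$ be the largest $m$ with $\lambda_m \in P(\Lambda)$, we get $\lambda_{m^*} + \delta \notin P(\Lambda)$, so $\lambda_{m^*} \in \max(\Lambda) \cap P^+$ and projects to $\nu$.

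Finally, $kC_{af}$ is a bounded simplex in $\mathring{\mathfrak{h}}^*_{\mathbb{R}}$ while $\overline{\Lambda} + \overline{Q}$ is a discrete translate of a lattice, so their intersection is finite and the bijection yields the finiteness of $\max(\Lambda) \cap P^+$. I expect the main obstacle to be the surjectivity step: one has to coordinate the $\delta$-shift carefully so that $\lambda_m$ lands exactly in $\Lambda + Q$ (not merely in $\Lambda + Q + \mathbb{C}\delta$), and one must invoke the nontrivial fact that any dominant integral $\mu \leq \Lambda$ of level $k$ actually occurs as a weight of $V(\Lambda)$---this is where the integrable highest-weight theory is essential.
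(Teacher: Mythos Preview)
The paper does not prove this proposition at all: it is quoted directly from \cite{Kac}, Proposition 12.6, and used as a black box. Your sketch is essentially the standard argument one finds in Kac's book, and it is correct. The well-definedness and injectivity steps are routine; your surjectivity argument is the right one, and you have correctly identified its nontrivial ingredient, namely that every dominant integral $\mu \leq \Lambda$ of the same level is actually a weight of $V(\Lambda)$ (this is \cite{Kac}, Proposition 12.5, which the present paper also invokes in the proof of Theorem \ref{WtTh}). One small point worth making explicit: to conclude $\lambda_m - \Lambda \in Q$ for every integer $m$ you are implicitly using that $(\Lambda \mid \Lambda_0) \in \mathbb{Z}$, which holds here because under the identification $\nu(d) = \Lambda_0$ one has $(\Lambda_i \mid \Lambda_0) = \Lambda_i(d) = 0$, so $(\Lambda \mid \Lambda_0)$ is just the $\delta$-coefficient of $\Lambda$ in $P$.
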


In the next section, we will give explicit descriptions for the maximal dominant weights of the integrable highest weight $\mf{g}$-modules $V((k-1)\Lambda_0 + \Lambda_s)$, where $k \geq 2$ and $0 \leq s \leq n-1$.


\section{Maximal dominant weights of $V((k-1)\Lambda_0 + \Lambda_s)$}\label{weights}

In order to explicitly determine the maximal dominant weights of $V((k-1)\Lambda_0 + \Lambda_s)$, where $k \geq 2$ and $0 \leq s \leq n-1$, we need to introduce the following notations. 

For fixed positive integers $p, q, i_{min}, i_{max}$, $1 \leq p, q \leq n-1$, $i_{min} \leq i_{max}$, we define $\mathcal{I}(p, \imax: q, \imin)$ to be the set of all $(q-p+1)-$tuples $(x_p, \ldots, x_q)$ satisfying:

\begin{itemize}
\item $\imax \geq x_i - x_{i-1} \geq \imin$ for $p < i \leq q$, and
\item $x_i - x_{i-1} \geq x_{i+1} - x_i$ for $p < i < q$.
\end{itemize}

Define the set $\mathcal{I}^*(p, \imin: q, \imax)$ to be those $(q-p+1)$-tuples $(x_p, x_{p+1}, \ldots , x_{q})$ such that $(x_q, x_{q-1}, \ldots, x_p) \in \mathcal{I}(q, \imax: p, \imin)$.  Notice that elements of $\mathcal{I}^*(p, \imin: q, \imax)$ are strictly decreasing sequences of nonnegative integers.

For given $\Lambda = (k-1)\Lambda_0 + \Lambda_s$,  $k \geq 2$, $0 \leq s \leq n-1$, we choose a pair of integers $(x_1,x_{n-1})$ such that
$x_1 , x_{n-1} \ge \delta_{s,0}$ and $x_1 + x_{n-1} \le k-1+ \delta_{s,0}$. 


We define sets of $(n-1)$-tuples of nonnegative integers $M_1, M_2, M_3, M_4,$ and $M_5$ as follows.  First, we define $M_1 = M_1(s,n: x_1,x_{n-1})$ to be the set with elements of the form
$$(x_1, x_2, \ldots , x_p = x_{p+1} = \cdots = x_q =\ell_1, x_{q+1}, \ldots, x_s, x_{s+1} = x_s - t_1, \ldots, x_{n-1}) $$
such that $(x_1, x_2, \ldots , x_p) \in \mathcal{I}(1, x_1: p, 1)$, $q \not = s$, $(x_q, x_{q+1}, \ldots, x_{s}) \in \mathcal{I}^*(q, 1: s, t_1+1)$, $(x_s, x_{s+1}, \ldots, x_{n-1}) \in \mathcal{I}^*(s, t_1: n-1, x_{n-1})$, for all $x_s, t_1, \ell_1$ satisfying $x_{n-1}+n-s-1 \leq x_s \leq \min\{ x_1(s-1)-1, x_{n-1}(n-s) \}$, $\max\{1, x_s - x_{n-1}(n-s-1)\} \leq t_1 \leq \left \lfloor \frac{x_s- x_{n-1}}{n-s-1} \right \rfloor$, and $ \max\{x_1,x_s+1\} \leq \ell_1 \leq \max \left \{a, a + m_1 -  (t_1+1) \right \}$, where $m_1$ is such that $(s(t_1+1)+ x_s) \equiv m_1\pmod{t_1 + x_s + 1}$, and $a= \frac{x_1((t_1+1)s + x_s - m_1)}{x_1+t_1+1}$.  Note that when $s = n-1,$  $t_1 = x_{n-1} = x_s$.

Similarly, we define $M_2 = M_2(s,n: x_1,x_{n-1})$ to be the set of $(n-1)$-tuples of nonnegative integers of the form 
$$(x_1, x_2, \ldots , x_p = x_{p+1} = \cdots = x_{q} = \ell_2,x_{q+1}, \ldots, x_s = x_{s+1} = \cdots = x_r, x_{r+1}, \ldots , x_{n-1}),$$ where $s \not = r$, $q \not = s$, $(x_1, x_2, \ldots , x_p) \in \mathcal{I}(1, x_1: p, 1)$, $(x_q, x_{q+1}, \ldots, x_{s}) \in \mathcal{I}^*(q, 1: s, 1)$, and $(x_r, x_{r+1}, \ldots, x_{n-1}) \in \mathcal{I}^*(r, 1: n-1, x_{n-1})$, 
 for all $x_s, \ell_2$ such that $\max\{x_{n-1},x_1-s+1\} \leq x_s \leq \min\{x_{n-1}(n-s-1),x_1(s-1)-1\}$ and $\max\{x_1, x_s+1\} \leq \ell_2 \leq \left \lfloor \frac{x_1}{x_1+1}(s + x_s) \right \rfloor$. 

We define $M_3 = M_3(s,n:x_1,x_{n-1}) = M_1(n-s,n:x_{n-1},x_{1})$ and $M_4 = M_4(s,n: x_1,x_{n-1}) = M_2(n-s,n:x_{n-1},x_{1})$.  

 Now, we define $M_5 = M_5(s,n: x_1,x_{n-1})$ to be the set of $(n-1)$-tuples of nonnegative integers of the form $$(x_1, x_2, \ldots x_q =  x_{q+1} = \cdots =  x_r = \ell_5, x_{r+1}, \ldots, x_{n-1})$$ such that $(x_1, x_2, \ldots x_q) \in \mathcal{I}(1, x_1: q, 1)$, $(x_r, x_{r+1}, \ldots, x_{n-1}) \in \mathcal{I}^*(r, 1: n-1, x_{n-1})$, where $\ell_5$ satisfy \\ $\max \{x_1, x_{n-1} \} \leq \ell_5 \leq \min\{sx_1, (n-s)x_{n-1}\}$ and $q \leq s \leq r$ if $s>0$ and  $\ell_5$  satisfy $ \max\{x_1,x_{n-1}\} \leq \ell_5 \leq$\\ $ \max \left \{\frac{x_1(x_{n-1}n - m_5)}{x_1+x_{n-1}}, \frac{x_1(x_{n-1}n - m_5)}{x_1+x_{n-1}} + m_5 - x_{n-1} \right \}$ with $x_1n \equiv m_5 \pmod{x_1 + x_{n-1}}$ if $s=0$. 

For $s > 0$ we observe that $x_s \not = \ell_j$, $\min\{ i \mid x_i = \ell_j \} < s$ in $M_1, M_2$ and $\min\{ i \mid x_i = \ell_j \} > s$ in $M_3, M_4$.  By definition, $M_1 \cap M_2 = \emptyset$ and hence $M_3 \cap M_4 = \emptyset$.  Thus, by the above observation, $M_1, M_2, M_3, M_4,$ and $M_5$ are disjoint when  $s > 0$.  Observe that when $s = 0$, $M_1 = M_2 = M_3 = M_4 = \emptyset$ and we only have the  set of $(n-1)$-tuples  $M_5$ nonempty.

%

Before proving the main theorem of this section, we need the following lemmas.

\begin{lemma}\label{upperell}Let $c,d,e,f$ be nonnegative integers $(c,d >0)$.  The largest nonnegative integer value of $\ell$ satisfying 
\begin{equation} \label{generic}  \ds \left \lceil \frac{\ell}{c} \right \rceil +  \left \lceil \frac{\ell - e}{d} \right \rceil \leq f \end{equation} is $\ds  \ell = \max \left \{\frac{c(df + e - m)}{c+d}, \frac{c(df + e - m)}{c+d} + m - d \right \}$, where $df+e \equiv m \pmod {c+d}$.  
\end{lemma}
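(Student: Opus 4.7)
The plan is to recast the ceiling inequality as a max-min linear program over two integer parameters, then solve it by inspecting the two integer candidates nearest the crossing point of two linear functions and matching the resulting values to the stated formula.

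First I would use the elementary fact that $\lceil x/r \rceil \leq y$ is equivalent to $x \leq ry$ for any integer $y$ and positive integer $r$. Applying this twice rewrites (\ref{generic}) as a feasibility condition: a nonnegative integer $\ell$ satisfies the inequality if and only if there exist $\alpha \in \ZZ$ and $\beta \in \Z$ (possibly negative) with $\alpha + \beta \leq f$, $\ell \leq c\alpha$, and $\ell \leq d\beta + e$. Consequently
\[
\ell_{\max} \;=\; \max_{\substack{\alpha \in \ZZ,\ \beta \in \Z \\ \alpha + \beta \leq f}} \min\{c\alpha,\, d\beta + e\}.
\]
Since $d\beta + e$ is increasing in $\beta$, the optimum is attained with $\beta = f - \alpha$, reducing the problem to maximizing $\phi(\alpha) := \min\{c\alpha,\, d(f-\alpha)+e\}$ over $\alpha \in \ZZ$.

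Next I would note that $c\alpha$ is strictly increasing and $d(f-\alpha)+e$ is strictly decreasing in $\alpha$, meeting at the real value $\alpha^* = (df+e)/(c+d) \geq 0$. Thus the integer maximum of $\phi$ is attained at $\lfloor \alpha^* \rfloor = q$ or at $\lceil \alpha^* \rceil \in \{q,q+1\}$, where $df + e = (c+d)q + m$ with $0 \leq m < c+d$. A direct computation gives
\[
\phi(q) \,=\, \min\{cq,\, cq+m\} \,=\, cq, \qquad \phi(q+1) \,=\, \min\{cq+c,\, cq+m-d\} \,=\, cq+m-d,
\]
the second equality using $m - d < c$. Taking the larger of the two, together with the identity $cq = c(df+e-m)/(c+d)$, reproduces the claimed formula.

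The subtlety worth highlighting, and the reason the answer is a max of two expressions rather than a single closed form, is that whether the optimum occurs at $\alpha = q$ or at $\alpha = q+1$ depends on the sign of $m - d$: if $m \geq d$ the second candidate wins, while if $m < d$ the first does. The outer $\max$ packages both possibilities into one formula. As a minor check, note that when $cq + m - d$ is negative the pair $(\alpha,\beta) = (q+1, f-q-1)$ produces an infeasible $\ell < 0$, but this is harmless because $cq \geq 0$ is always achieved by the pair $(q, f - q)$ and automatically wins the outer maximum.
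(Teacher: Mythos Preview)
Your argument is correct and takes a genuinely different route from the paper. The paper proceeds by direct verification: it substitutes the claimed value of $\ell$ (in the case $m>d$, declaring $m\le d$ similar) into the left side of (\ref{generic}) and simplifies to obtain exactly $f$, establishing feasibility. Your approach instead recasts the problem as maximizing $\phi(\alpha)=\min\{c\alpha,\,d(f-\alpha)+e\}$ over integers, locates the real crossing point $\alpha^*=(df+e)/(c+d)$, and evaluates $\phi$ at the two adjacent integers $q=\lfloor\alpha^*\rfloor$ and $q+1$. This buys you two things the paper's proof does not spell out: it proves maximality (not just that the stated $\ell$ satisfies the inequality, but that no larger integer does), and it explains structurally why the answer is a $\max$ of two expressions, namely that the optimal integer $\alpha$ lands on one side or the other of $\alpha^*$ according to the sign of $m-d$. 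The paper's verification is shorter if one already trusts the formula; your derivation is self-contained and would let a reader discover the formula from scratch.
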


\begin{proof}
We show that the given value of $\ell$ satisfies the inequality (\ref{generic}) when $m > d$.  The case for $m \leq d$ is similar.   

{\allowdisplaybreaks
\begin{align}
\left \lceil \frac{\ell}{c} \right \rceil &+  \left \lceil \frac{\ell - e}{d} \right \rceil = \left \lceil \frac{\frac{c(df + e - m)}{c+d} + m - d}{c} \right \rceil + \left \lceil \frac{\frac{c(df + e - m)}{c+d} + m - d - e}{d} \right \rceil \notag \\
	&=\frac{df + e - m}{c+d} +  \left \lceil \frac{m - d}{c} \right \rceil + \left \lceil \frac{c(df + e - m) + (c+d)(m  - e)}{d(c+d)} \right \rceil - 1 \notag \\
	&=\frac{df + e - m}{c+d} +  1 + \left \lceil \frac{cf+ m  - e}{c+d} \right \rceil - 1=\frac{df + e - m}{c+d}  + \left \lceil \frac{(c+d)f - df + m  - e}{c+d} \right \rceil \notag \\	
	&=\frac{df + e - m}{c+d} + f + \left \lceil \frac{ - df + m  - e}{c+d} \right \rceil = f  \notag 	
\end{align}}
\end{proof}

\begin{lemma}\label{pattern} Let $\boldsymbol{x} = (x_{1}, x_{2}, \ldots, x_{n-1})^{T} \in \mathbb{Z}^{n-1}_{\geq 0}$ and let  $(\mathring{A}\boldsymbol{x})_i$ denote the $i^{th}$ row entry in $\mathring{A}\boldsymbol{x}$.  For convenience, we assume $x_0 = x_n = 0$. The following statements are true.
\begin{enumerate}
\item Suppose $(\mathring{A}\boldsymbol{x})_i \geq 0$ for $1 \leq i \leq n-1$.  Then $x_{j+1} - x_j  \leq x_j - x_{j-1}$ for all $1 \leq j \leq n-1$. 

\item  Suppose for some $1 \leq r \leq n-1$, $(\mathring{A}\boldsymbol{x})_i \geq 0$, $1 \leq i \not = r \leq n-1$ and $(\mathring{A}\boldsymbol{x})_r \geq -1$.  Then for all $1 \leq j \leq n-1,$  

$\begin{cases}
x_{j+1} - x_j  \leq x_j - x_{j-1},   & \text{ if } j \not = r\\
x_{j+1} - x_j  \leq 1+ x_j - x_{j-1},  & \text{ if } j = r\\
\end{cases}$. 

\end{enumerate}
\end{lemma}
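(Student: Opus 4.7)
The plan is to observe that the statement is essentially an unpacking of the definition of $\mathring{A}\boldsymbol{x}$. Since $\mathring{A}$ is the Cartan matrix of type $A_{n-1}$, it is tridiagonal with entries $a_{ii} = 2$ and $a_{i,i\pm 1} = -1$. Therefore, using the boundary convention $x_{0} = x_{n} = 0$, for every $1 \leq j \leq n-1$ we have
\[
(\mathring{A}\boldsymbol{x})_{j} = 2x_{j} - x_{j-1} - x_{j+1}.
\]
(At the endpoints $j=1$ and $j=n-1$ this formula automatically reduces to $2x_{1}-x_{2}$ and $2x_{n-1}-x_{n-2}$, respectively, thanks to the convention.)

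For part (1), I would simply rewrite the hypothesis $(\mathring{A}\boldsymbol{x})_{j} \ge 0$ as
\[
2x_{j} - x_{j-1} - x_{j+1} \ge 0,
\]
which after rearrangement is exactly $x_{j+1} - x_{j} \le x_{j} - x_{j-1}$, for each $1 \leq j \leq n-1$. The boundary indices require no special attention because the convention $x_{0}=x_{n}=0$ is built into the statement.

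For part (2), the same rearrangement handles every $j \neq r$. For $j=r$ the hypothesis is weakened to $(\mathring{A}\boldsymbol{x})_{r} \ge -1$, which rewrites as
\[
2x_{r} - x_{r-1} - x_{r+1} \ge -1,
\]
and hence $x_{r+1} - x_{r} \le 1 + (x_{r} - x_{r-1})$, as required.

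There is no real obstacle here; the lemma is a direct translation between the sign of a row of $\mathring{A}\boldsymbol{x}$ and the monotonicity of successive differences. The only point worth double-checking in writing is that the boundary rows ($j=1$ and $j=n-1$) are correctly covered by the $x_{0}=x_{n}=0$ convention, so that no separate base cases are needed.
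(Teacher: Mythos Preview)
Your proposal is correct and follows essentially the same approach as the paper's proof: both simply unpack the tridiagonal form $(\mathring{A}\boldsymbol{x})_{j} = -x_{j-1} + 2x_{j} - x_{j+1}$ and rearrange the inequality. The paper proves part (2) first and notes part (1) is similar, while you treat part (1) first, but the content is identical.
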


\begin{proof}  We will prove the second statement. Suppose for some $1 \leq r \leq n-1$, $(\mathring{A}\boldsymbol{x})_i \geq 0$, $1 \leq i \not = r \leq n-1$ and $(\mathring{A}\boldsymbol{x})_r \geq -1$.  For $j \not = r$,  $0 \leq -x_{j-1} + 2x_{j} - x_{j+1}$, which implies $x_{j+1} - x_j  \leq x_j - x_{j-1}$.  For $j = r$, $ 0 \leq 1-x_{r-1} + 2x_{r} - x_{r+1}$ and so $x_{r+1} - x_r  \leq 1+ x_r - x_{r-1} .$  The proof of the first statement is similar.

\end{proof}

\begin{lemma}\label{SysLem}

The $(q-p+1)$-tuples in $\mathcal{I}(p, \imax: q, \imin)$, $\mathcal{I}^*(p, \imin: q, \imax)$, as well as the $(q-p+1)$-tuple $(a,a, \ldots, a),$  satisfy the system of inequalities $-x_{j}+ 2x_{j+1} - x_{j+2} \geq 0$, for $p \leq j \leq q-2$.  
\end{lemma}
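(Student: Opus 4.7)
The plan is to observe first that the inequality $-x_{j}+2x_{j+1}-x_{j+2}\ge 0$ is equivalent to the discrete concavity condition $x_{j+2}-x_{j+1} \le x_{j+1}-x_j$ on consecutive differences, and then to verify this concavity condition case by case against the three families of tuples.

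For the constant tuple $(a,a,\ldots,a)$, both sides of the rewritten inequality equal zero, so the condition holds trivially. For a tuple $(x_p,\ldots,x_q) \in \mathcal{I}(p,\imax:q,\imin)$, the second bullet of the definition already asserts $x_i - x_{i-1} \ge x_{i+1} - x_i$ for all $p < i < q$. Substituting $i = j+1$ and observing that the range $p < j+1 < q$ is precisely $p \le j \le q-2$, the desired inequality is immediate.

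The $\mathcal{I}^*$ case requires a brief reindexing argument, but no real computation. Given $(x_p,x_{p+1},\ldots,x_q)\in \mathcal{I}^*(p,\imin:q,\imax)$, set $z_k = x_{p+q-k}$ so that $(z_p,\ldots,z_q) = (x_q,x_{q-1},\ldots,x_p) \in \mathcal{I}(q,\imax:p,\imin)$ by definition. The concavity condition $z_{i}-z_{i-1} \ge z_{i+1}-z_{i}$ holding for the reversed tuple translates under the substitution $k \mapsto p+q-k$ into $-(x_{p+q-i+1}-x_{p+q-i}) \ge -(x_{p+q-i}-x_{p+q-i-1})$, which, reindexed by letting $i' = p+q-i$, becomes exactly $x_{i'+1}-x_{i'} \le x_{i'}-x_{i'-1}$ for the original tuple. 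Thus $\mathcal{I}^*$ tuples satisfy the same concavity, and the conclusion follows for $p \le j \le q-2$ as before.

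The only real pitfall will be bookkeeping in the reversal argument — making sure that the indices shift consistently and that the bounds $\imin$ and $\imax$ on the differences (which swap sign upon reversing) do not affect the concavity conclusion, which depends only on the monotonicity of the differences, not their magnitude or sign. Once this is checked, no inequalities on $\imin$, $\imax$ are actually used in the proof, so the three parts combine into a direct verification.
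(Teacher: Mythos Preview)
Your proposal is correct and follows essentially the same approach as the paper: both verify the concavity condition directly from the definition for $\mathcal{I}$ and then obtain the $\mathcal{I}^*$ case by reversal. The only cosmetic difference is that the paper parametrizes the consecutive differences as $\alpha$ and $\alpha-\beta$ and computes $-x_j+2x_{j+1}-x_{j+2}=\beta\ge 0$, whereas you simply cite the second bullet of the definition; the constant-tuple case is trivial in both.
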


\begin{proof}
Let $j$ be such that $p \leq j \leq q-2$. Consider $(x_p, x_{p+1}, \ldots, x_q) \in \mathcal{I}(p, \imax: q, \imin)$.  Then $x_{j+1} = x_j + \alpha$, for some $\imin \leq \alpha \leq \imax$ and $x_{j+2} = x_j + \alpha + (\alpha - \beta)$ for some $0 \leq \beta \leq \alpha - i_{min}$.  Then $- x_j + 2x_{j+1} - x_{j+2} = \beta \geq 0$.  Since the tuples in $\mathcal{I}^*(p, \imin: q, \imax)$ can be obtained by reversing the order of the tuples in $\mathcal{I}(q, \imax: p, \imin)$, the rest of the lemma follows.
\end{proof}


\begin{lemma}\label{WtThLem}
For $n \geq 2$, $0 \leq s \leq  n-1 $, $x_1, x_{n-1}  \in \mathbb{Z}$ such that $x_1 , x_{n-1} \ge \delta_{s,0}$ and $x_1 + x_{n-1} \le k-1+ \delta_{s,0}$,
let $\mc{S} = \{\boldsymbol{x} = (x_{1}, x_{2}, \ldots, x_{n-1})^{T} \in \mathbb{Z}^{n-1}_{\geq 0} \mid  (\mathring{A} \boldsymbol{x} )_{i} \geq 0 \text{ for } i \not = s,  (\mathring{A} \boldsymbol{x} )_s \geq -1\}$.  Then $\mc{S} =  M_1\cup M_2 \cup M_3 \cup M_4 \cup M_5$, where $M_j, 1 \leq j \leq 5$ are the tuples given above.
\end{lemma}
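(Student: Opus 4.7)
The plan is to prove the set equality by showing both inclusions separately. The forward direction $M_1 \cup \cdots \cup M_5 \subseteq \mc{S}$ will be a direct verification, while the reverse direction is driven by the ``concavity with a possible bump at $s$'' structure that Lemma \ref{pattern} extracts from the defining conditions of $\mc{S}$.

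For $M_1 \cup M_2 \cup M_3 \cup M_4 \cup M_5 \subseteq \mc{S}$, I would take an arbitrary $\boldsymbol{x}$ in any $M_j$ and verify $(\mathring{A}\boldsymbol{x})_i \geq 0$ for $i \neq s$ and $(\mathring{A}\boldsymbol{x})_s \geq -1$ directly. Lemma \ref{SysLem} supplies the interior second-difference inequality on each $\mc{I}(\cdot)$ or $\mc{I}^*(\cdot)$ segment. What remains is the boundary rows $i=1$ and $i=n-1$ (handled using $x_0 = x_n = 0$ together with the imposed step bounds $x_1$ and $x_{n-1}$), the transitions between ascending, plateau, and descending segments (handled by direct computation), and the bump row at $i = s$. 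For $M_1$, for instance, $(\mathring{A}\boldsymbol{x})_s = -(x_{s-1}-x_s) + t_1 \geq -1$ because the final drop before $s$ in $\mc{I}^*(q, 1: s, t_1+1)$ is bounded by $t_1+1$, while $(\mathring{A}\boldsymbol{x})_{s+1} \geq 0$ because the first step $x_s - x_{s+1} = t_1$ matches the $\imin$ parameter of $\mc{I}^*(s, t_1: n-1, x_{n-1})$.

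For $\mc{S} \subseteq M_1 \cup \cdots \cup M_5$, I would apply Lemma \ref{pattern}(2) with $r = s$. Writing $d_j = x_j - x_{j-1}$ (with $x_0 = x_n = 0$), the sequence $d_1, d_2, \ldots, d_n$ is weakly decreasing, except that $d_{s+1} \leq d_s + 1$ is allowed. Since $d_1 = x_1 \geq 0$ and $d_n = -x_{n-1} \leq 0$, there is a maximal (possibly singleton) interval $[p,q]$ on which $\boldsymbol{x}$ attains its maximum value $\ell$. The argument then splits on the location of $[p,q]$ relative to $s$: if $p \leq s \leq q$, the tuple has the single-plateau shape of $M_5$; if $q < s$, the descent past $q$ is strict, and at position $s$ the bump either leaves $d_{s+1}$ strictly negative, giving $M_1$ with $t_1 = -d_{s+1}$, or raises $d_{s+1}$ all the way to $0$, forcing $d_s = -1$ and creating a second plateau starting at $s$, giving $M_2$; the symmetric case $p > s$ reduces to $M_3$ or $M_4$ via the involution $i \mapsto n - i$, which interchanges $x_1$ with $x_{n-1}$ and $s$ with $n-s$, consistently with the built-in definitions $M_3 = M_1(n-s, n: x_{n-1}, x_1)$ and $M_4 = M_2(n-s, n: x_{n-1}, x_1)$. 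The degenerate case $s = 0$ gives no bump at all, Lemma \ref{pattern}(1) applies, and the fully concave sequence lands in $M_5$ with the $s = 0$ bounds.

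The main technical obstacle is to verify that the parameter ranges in the definitions of the $M_j$ coincide exactly with the constraints forced on $\boldsymbol{x}$. The bounds on $x_s$ and $t_1$ follow from the requirement that the pieces $(x_q, \ldots, x_s) \in \mc{I}^*(q, 1: s, t_1+1)$ and $(x_s, \ldots, x_{n-1}) \in \mc{I}^*(s, t_1: n-1, x_{n-1})$ have room to fit, enforcing lower bounds like $x_{n-1} + n - s - 1 \leq x_s$. The upper bounds on $\ell_1, \ell_2, \ell_5$ are the most delicate: they reflect that the ascent from $0$ to $\ell$ requires at least $\lceil \ell / x_1 \rceil$ steps and the descent from $\ell$ down to $0$ with the prescribed minimum step at $s$ requires at least $\lceil (\ell - e)/d \rceil$ further steps, leading to ceiling inequalities of the form $\lceil \ell/c \rceil + \lceil (\ell - e)/d \rceil \leq f$ whose maximal solutions are supplied by Lemma \ref{upperell}. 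I expect the bookkeeping needed to match these counts case by case, particularly the interaction of the bump at $s$ with the ceiling counts, to be the most intricate part of the argument.
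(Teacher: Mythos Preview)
Your proposal is correct and follows essentially the same approach as the paper: both inclusions are handled separately, the forward one via Lemma~\ref{SysLem} plus direct checks at the segment junctions and at $i=s$, and the reverse one via Lemma~\ref{pattern} to force the ascend--plateau--descend shape (with a possible bump at $s$), followed by a case split on the position of the plateau relative to $s$ and on whether $x_s$ repeats, with Lemma~\ref{upperell} supplying the exact upper bounds on the plateau height. Your phrasing in terms of the difference sequence $d_j = x_j - x_{j-1}$ is a clean way to organize the same argument the paper carries out by direct comparison of consecutive entries.
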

\begin{proof}

First, let us show that $M_1\cup M_2 \cup M_3 \cup M_4 \cup M_5 \subseteq \mathcal{S}$.  Let $\boldsymbol{x} = (x_1, x_2, \ldots x_{n-1}) \in M_1\cup M_2 \cup M_3 \cup M_4 \cup M_5$.  Since the $M_j$'s are disjoint, $\boldsymbol{x} \in M_j$ for some $j$. Suppose  $\boldsymbol{x} \in M_1$.  Then $(x_1, x_2, \ldots, x_p) \in   \mathcal{I}(1, x_1: p, 1)$, $(x_p, x_{p+1}, \ldots , x_q) = (\ell_1, \ell_1, \ldots , \ell_1)$, $(x_q, x_{q+1}, \ldots, x_s) \in \mathcal{I}^*(q, 1: s, t_1+1)$, and $(x_s, x_{s+1}, \ldots , x_{n-1}) \in \mathcal{I}^*(s, t_1: n-1, x_{n-1})$.  It follows from Lemma \ref{SysLem}  that $(\mathring{A} \boldsymbol{x} )_{i} \geq 0$ for all $i \not = 1,p,q,s,n-1$, so we must check these values of $i$.  Now, $(\mathring{A} \boldsymbol{x} )_{1} = 2x_1 - x_2 \geq 0$ since $(x_1, x_2, \ldots, x_p) \in  \mathcal{I}(1, x_1: p, 1)$; similarly $(\mathring{A} \boldsymbol{x} )_{n-1} = - x_{n-2} + 2x_{n-1} \geq 0$.  Additionally, $(\mathring{A} \boldsymbol{x} )_{p} = -x_{p-1} + 2 x_{p} - x_{p+1} = (x_p - x_{p-1}) +  (x_p - x_{p+1}) \geq 0$ since $(x_1, x_2, \ldots, x_p) \in   \mathcal{I}(1, x_1: p, 1)$ and either $x_p = x_{p+1}$ or $(x_p, x_{p+1}, \ldots, x_s) \in \mathcal{I}^*(p, 1: s, t_1+1)$.  Similarly, $(\mathring{A} \boldsymbol{x} )_{q} \geq 0.$  Finally, $(\mathring{A} \boldsymbol{x} )_{s} =  -x_{s-1} + 2 x_{s} - x_{s+1} = (-x_{s-1} + x_s) + (x_s - x_{s+1}) \geq -(t_1+1) + t_1  = -1$. Thus $\boldsymbol{x} \in \mc{S}$.  Similarly, if $\boldsymbol{x} \in M_j, j = 2, 3, 4, 5$, it can be shown that $\boldsymbol{x} \in \mc{S}$.

Now, we show that $\mc{S} \subseteq M_1\cup M_2 \cup M_3 \cup M_4 \cup M_5$. Let $\boldsymbol{x} = (x_1, x_2, \ldots x_{n-1}) \in \mc{S}$.  We wish to show that $\boldsymbol{x} \in M_j$ for some $j=1,2,3,4,5$.  Denote $\max\{x_i \mid i=1,2,\ldots, n-1\}$ by $\ell$.  Note that $\ell \geq \max\{x_1, x_{n-1} \}$.

Suppose $s=0$ and suppose further that $r$ is the smallest integer such that $x_{r} > x_{r+1}$.  Then by Lemma \ref{pattern}, $(x_r, x_{r+1}, \ldots, x_{n-1}) \in \mathcal{I}^*(r, 1: {n-1}, x_{n-1}).$  Then $x_{r-1} \leq x_r$.  Suppose that $q$ is the largest integer such that $x_{q-1} < x_q$.   Then $x_q = x_{q+1} = \cdots = x_r = \ell$.  By Lemma \ref{pattern}, $(x_1, x_2, \ldots , x_q) \in \mc{I}(1, x_1: q, 1)$.  Hence, $\boldsymbol{x}$ has the structure of an element in $M_5$.  Observe that the largest value of $\ell$ occurs when we increase by $x_1$ and decrease by $x_{n-1}$ as many times as possible.  Therefore, $\ell$ satisfies the inequality $\left \lceil \frac{\ell-x_1}{x_1}\right \rceil +1 + \left \lceil \frac{\ell-x_{n-1}}{x_{n-1}} \right \rceil \leq n-1$, which is equivalent to $\left \lceil \frac{\ell}{x_1} \right \rceil + \left \lceil \frac{\ell}{x_{n-1}} \right \rceil \leq n$.  So, by Lemma \ref{upperell},  we have $\ell \leq \max \left \{\frac{x_1(x_{n-1}n - m)}{x_1+x_{n-1}}, \frac{x_1(x_{n-1}n - m)}{x_1+x_{n-1}} + m - x_{n-1} \right \}$, where $x_1n \equiv m \pmod{x_1 + x_{n-1}}$.  A similar argument can be made in the case in which $s > 0$ and $x_s = \ell$.  

Now consider the case in which $s > 0$ and $x_s \not = \ell$.   Note that either $\min\{i \mid x_i = \ell\} < s$ or $\min\{i \mid x_i = \ell\} > s$ and either the value of $x_s$ consecutively repeats or does not consecutively repeat.

First, consider the case in which $\min\{i \mid x_i = \ell\} < s$ and the value of $x_s$ does not consecutively repeat.  Suppose $p$ is the smallest positive integer such that $x_p = \ell$.  Then by Lemma \ref{pattern}, $(x_1, x_2, \ldots, x_{p}) \in \mc{I}(1, x_1: p, 1)$.  Now let $q$ be the smallest positive integer such that $x_q > x_{q+1}$.  Then $x_p = x_{p+1} = \cdots = x_{q} = \ell$.  By Lemma \ref{pattern}, $(x_q, x_{q+1}, \ldots, x_{s}) \in \mathcal{I}^*(q, 1: s, t+1)$ and  $(x_s, x_{s+1}, \ldots, x_{n-1}) \in \mathcal{I}^*(s, t: {n-1}, x_{n-1})$, where $t = x_{s} - x_{s+1}$.  Hence, $\boldsymbol{x}$ has the structure of an element of $M_1$.  Since $(x_s, x_{s+1}, \ldots, x_{n-1}) \in \mathcal{I}^*(s, t: {n-1}, x_{n-1})$ and since we could have $x_{s-1} = \ell$ and in this case $(x_1, x_2, \ldots , x_{s-1}) \in \mathcal{I}(1, x_1: s-1,1)$, we obtain $x_{n-1}+n-s-1 \leq x_s \leq \min\{ x_1(s-1)-1, x_{n-1}(n-s) \}$.  Now, we consider $t = x_{s} - x_{s+1}$.  Because $(x_s, x_{s+1}, \ldots, x_{n-1}) \in \mathcal{I}^*(s, t: {n-1}, x_{n-1})$ and $x_s - t \leq x_{n-1}(n-s-1)$, $\max\{1, x_s - x_{n-1}(n-s-1)\} \leq t \leq \left \lfloor \frac{x_s- x_{n-1}}{n-s-1} \right \rfloor$.  Notice that $\ell$ must satisfy $ \left \lceil  \frac{\ell - x_1}{x_1}  \right \rceil  + 1+ \left \lceil  \frac{\ell - x_s - (t+1)}{t+1}  \right \rceil  \leq s -1$, which expresses increasing by $x_1$, the largest possible increase and decreasing by $t+1$, the largest possible decrease, as many times as possible, obtaining $x_s$.  The equation simplifies to $ \left \lceil  \frac{\ell }{x_1}  \right \rceil + \left \lceil  \frac{\ell -x_s}{t+1}  \right \rceil \leq s$ and by Lemma \ref{upperell}, we obtain $ \max\{x_1,x_s+1\} \leq \ell \leq \max \left \{\frac{x_1((t+1)s + x_s - m)}{x_1+t+1}, \frac{x_1((t+1)s + x_s - m)}{x_1+t+1} + m - (t+1) \right \}$, where $m$ is such that $(s(t+1)+ x_s) \equiv m\pmod{t+ x_s + 1}$.

Now, consider the case in which $\min\{i \mid x_i = \ell\} < s$, the value of $x_s$ does consecutively repeat, and $x_s \not = \ell$.  By a similar argument as above, it follows that $(x_1, x_2, \ldots , x_p) \in \mathcal{I}(1, x_1: p, 1)$,  $x_p = x_{p+1} = \cdots = x_{q} = \ell$, and $(x_q, x_{q+1}, \ldots, x_{s}) \in \mathcal{I}^*(q, 1: s, 1)$.  Suppose $r$ is the smallest integer greater than $s$ such that $x_r > x_{r+1}$.  Then by Lemma \ref{pattern}, $x_s = x_{s+1} = \cdots = x_{r}$ and $(x_r, x_{r+1}, \ldots, x_{n-1}) \in \mathcal{I}^*(r, 1: {n-1}, x_{n-1})$. Therefore, $\boldsymbol{x}$ has the form of an element of $M_2$.  Since $(x_1, x_2, \ldots, x_{p}) \in \mathcal{I}(1, x_1: p, 1)$, $\min\{i \mid x_i = \ell\} < s$, and $(x_r, x_{r+1}, \ldots, x_{n-1}) \in \mathcal{I}^*(r, 1: {n-1}, x_{n-1})$, $\max\{x_{n-1},x_1-s+1\} \leq x_s \leq \min\{x_{n-1}(n-s-1),x_1(s-1)-1\}$.  By similar reasoning as above, $\ell$ must satisfy $\left \lceil \frac{\ell - x_1}{x_1} \right \rceil + 1 + \ell - x_s \leq s$, giving the condition $\max\{x_1, x_s+1\} \leq \ell \leq \left \lfloor \frac{x_1}{x_1+1}(s + x_s) \right \rfloor$.

By similar reasoning, if we have an $\boldsymbol{x}$ such that $\min\{i \mid x_i = \ell\} > s$, $x_s \not = \ell$ and the value of $x_s$ does not consecutively repeat, we find that $\boldsymbol{x} \in M_3$.  If instead,  $\boldsymbol{x}$ is such that $\min\{i \mid x_i = \ell\} > s$, $x_s \not = \ell$ and the value of $x_s$ does consecutively repeat, $\boldsymbol{x} \in M_4$.
\end{proof}

%
%

Now, for $1 \leq j \leq 5$, we define the sets of weights  $W_j = \{ \Lambda - \ell_j \alpha_0 -  \sum_{i=1}^{n-1}(\ell_j-x_i) \alpha_{i} \}$, where $(x_1, x_2, \ldots, x_{n-1}) \in \ds \bigcup_{x_1,x_{n-1}} M_j(s,n:x_1,x_{n-1})$.  Note that if $s=0$, then $W_1 = W_2 = W_3 = W_4 = \emptyset$.

\begin{theorem}\label{WtTh} Let $n \geq 2$, $\Lambda = (k-1)\Lambda_{0} + \Lambda_{s}$, $k \geq 2$,  $0 \leq s \leq n-1$. Then $\max(\Lambda) \cap P^+ = \{\Lambda\} \cup  W_1 \cup W_2 \cup W_3 \cup W_4 \cup W_5$.

\end{theorem}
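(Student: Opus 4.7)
The plan is to use Proposition \ref{KacTh} to identify $\max(\Lambda)\cap P^+$ with $kC_{af}\cap(\overline{\Lambda}+\overline{Q})$ and then apply Lemma \ref{WtThLem} for the combinatorial decomposition. I begin with a simple observation: writing $\mu=\Lambda-\sum_{i=0}^{n-1}m_i\alpha_i$ with $m_i\in\ZZ$, the maximality condition $\mu+\delta\notin P(\Lambda)$ is equivalent to $\min_i m_i=0$. Indeed, $\mu+\delta=\Lambda-\sum(m_i-1)\alpha_i$ is still dominant when $\mu$ is (since $\delta(h_j)=0$ for all $j$), and for a dominant weight the condition $\nu\leq\Lambda$ is equivalent to $\nu\in P(\Lambda)$, so $\mu+\delta\in P(\Lambda)$ iff every $m_i\geq 1$.

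Next I parametrize. Set $\ell=m_0$ and $x_i=\ell-m_i$ for $1\leq i\leq n-1$ (with boundary convention $x_0=x_n=0$). A direct computation using $\alpha_j(h_i)=a_{ij}$ yields
\begin{equation*}
\mu(h_j)=\delta_{sj}+(\mathring{A}\bsx)_j \quad (1\leq j\leq n-1), \qquad \mu(h_0)=k-1+\delta_{s,0}-x_1-x_{n-1}.
\end{equation*}
Two cases arise. If $\ell>0$, then the minimum $0$ of the $m_i$'s is attained at some index $j\geq 1$, giving $\bsx\in\ZZ^{n-1}$ with all $x_i\geq 0$ and $\max_i x_i=\ell$. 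The dominance conditions become exactly $\bsx\in\mathcal{S}$ and $x_1+x_{n-1}\leq k-1+\delta_{s,0}$, the hypotheses of Lemma \ref{WtThLem}. If $\ell=0$, then $m_0=0$, $m_i\geq 0$, and dominance forces $(\mathring{A}\boldsymbol{m})_j\leq\delta_{sj}$ for all $1\leq j\leq n-1$, where $\boldsymbol{m}=(m_1,\ldots,m_{n-1})$. Equivalently, $\overline{\mu}=\overline{\Lambda_s}-\sum_{i=1}^{n-1}m_i\alpha_i$ is a classical dominant integral weight of the finite-dimensional $sl(n)$-module $V(\overline{\Lambda_s})$; since every fundamental $A_{n-1}$-module is minuscule (and $V(0)$ is trivial), the only such dominant weight is $\overline{\Lambda_s}$ itself, forcing $\boldsymbol{m}=0$ and $\mu=\Lambda$.

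Applying Lemma \ref{WtThLem} to the case $\ell>0$, the set of valid $\bsx$ decomposes as $M_1\cup M_2\cup M_3\cup M_4\cup M_5$. Inspection of each definition shows that the plateau value $\ell_j$ is exactly $\max_i x_i$, so $\ell=\ell_j$ and the corresponding max dominant weight $\Lambda-\ell_j\alpha_0-\sum(\ell_j-x_i)\alpha_i$ lies in $W_j$. The lower bound $x_1,x_{n-1}\geq\delta_{s,0}$ built into the $M_j$'s is consistent with the earlier conditions: for $s>0$ it is vacuous, while for $s=0$ the relation $(\mathring{A}\bsx)_1\geq 0$ with $x_1=0$ forces $x_2\leq 0$ hence $x_2=0$, and iterating yields $\bsx=0$, i.e., $\mu=\Lambda$ (and similarly for $x_{n-1}=0$). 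Conversely, every element of each $W_j$ comes from some $\bsx\in M_j$ satisfying the full list of conditions, hence is a max dominant weight via the bijection of Proposition \ref{KacTh}. This gives the asserted decomposition $\max(\Lambda)\cap P^+=\{\Lambda\}\cup W_1\cup W_2\cup W_3\cup W_4\cup W_5$.

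The main obstacle is the ``$\ell=0$ case contributes only $\Lambda$'' step, which relies on the minuscule property of the fundamental $A_{n-1}$-modules; absent this input, the list $\{\Lambda\}\cup\bigcup W_j$ might fail to capture max dominant weights in which $\alpha_0$ does not appear. The remaining combinatorial matching—verifying $\ell=\ell_j$ inside each stratum and checking the automatic endpoint constraints—is straightforward once Lemma \ref{WtThLem} is invoked.
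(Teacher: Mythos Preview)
Your overall strategy matches the paper's: reduce to the classification of $\bsx\in\ZZ^{n-1}_{\geq 0}$ satisfying $(\mathring{A}\bsx)_i\geq -\delta_{si}$ together with $x_1+x_{n-1}\leq k-1+\delta_{s,0}$, and then invoke Lemma~\ref{WtThLem}. However, there is a genuine gap in your $\ell>0$ case. You write ``the minimum $0$ of the $m_i$'s is attained at some index $j\geq 1$, giving $\bsx\in\ZZ^{n-1}$ with all $x_i\geq 0$.'' The clause ``all $x_i\geq 0$'' is precisely the statement $m_i\leq m_0$ for every $i$, and this does \emph{not} follow from maximality alone; nothing you have said rules out $m_i>m_0$ for some $i\geq 1$. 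Since Lemma~\ref{WtThLem} is stated only for $\bsx\in\ZZ^{n-1}_{\geq 0}$, you cannot apply it without first establishing this.

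The paper fills exactly this point, but in a different order: it first passes via Proposition~\ref{KacTh} to $kC_{af}\cap(\overline{\Lambda_s}+\overline{Q})$, obtains the same system $(\mathring{A}\bsx)_i\geq -\delta_{si}$, and then argues (citing \cite{Tsu}) that because $\mathring{A}$ is a Cartan matrix of finite type this system forces $x_i\geq 0$. Only afterwards does the paper use maximality to pin down $q_0=-\ell$. You have reversed the order (maximality first, dominance second), so you still owe the nonnegativity step; a direct argument works---take $r$ with $x_r$ minimal and negative, observe $(\mathring{A}\bsx)_r\leq 0$, and propagate the minimum to a boundary index to reach a contradiction---but it must be supplied. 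Your $\ell=0$ treatment via the minuscule property of the fundamental $A_{n-1}$-modules is correct, though more elaborate than necessary: the paper avoids it by using the bijection of Proposition~\ref{KacTh} directly, under which the point $\bsx=0$ visibly corresponds to $\overline{\Lambda_s}$ and hence to $\Lambda$.
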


\begin{proof}

By Proposition \ref{KacTh}, the map $$\begin{array}{cccc}
&\max(\Lambda) \cap P^{+}&\longrightarrow& kC_{af} \cap (\overline{\Lambda} + \overline{Q})\\
&\lambda &\mapsto&  \overline{\lambda}
\end{array}$$ is a bijection.  We will first find all elements in $kC_{af} \cap  (\overline{\Lambda} + \overline{Q})$ and then use the bijection to describe all elements of $\max(\Lambda) \cap P^{+}$.   Since $\overline{\Lambda}_0 = 0$, by definition we have
$$ kC_{af} \cap (\overline{\Lambda}_s + \overline{Q}) = \left \{ \overline{\lambda} = \overline{\Lambda}_s + \left . \sum_{j=1}^{n-1}x_{j}\alpha_{j}  \right \vert \lambda(h_{j}) \geq 0 , \   1 \leq j < n, (\lambda | \theta) \leq k \right \}.$$

For $\overline{\lambda} \in kC_{af} \cap (\overline{\Lambda}_s + \overline{Q}),$ we denote  $\boldsymbol{x}_{\overline{\lambda}} = (x_1, x_2, \ldots, x_{n-1})$. Then $\boldsymbol{x}_{\overline{\lambda}}$ satisfies $(\overline{\lambda} | \theta) = \min\{s,1\} + x_{1} + x_{n-1}  \leq k$ and $\overline{\lambda}(h_{j}) = \delta_{sj}-x_{j-1} + 2x_{j} - x_{j+1}  \geq 0$, for $1 \leq j \leq n-2$, where we take $x_0 = x_n = 0$.

These conditions are equivalent to
\begin{equation} \label{concise}
\begin{cases}
(\mathring{A}\boldsymbol{x} )_{i} \geq 0, \ 1 \leq i \not = s \leq n-1,\\
 (\mathring{A}\boldsymbol{x} )_s \geq -1,\\
 \min\{s,1\}  + x_{1} + x_{n-1} \leq k.
\end{cases}
\end{equation}
Note that $ (\mathring{A}\boldsymbol{x} )_s \geq -1$ is vacuous when $s=0$.  Since $\mathring{A}$ is a Cartan matrix of finite type and $\boldsymbol{x}_{\overline{\lambda}}$ satisfies (\ref{concise}), we have $x_i \in \mathbb{Z}_{\geq 0}$, $1 \leq i \leq n-1$.  (See proof of Theorem 1.4 in \cite{Tsu}.)  Consider $x_1$ and $x_{n-1}$.  Observe that if $x_1=x_{n-1}=0$, then $\boldsymbol{x}_{\overline{\lambda}} = (0, 0, \ldots, 0)$.  In this case, $\overline{\lambda} = \overline{\Lambda}_s$.  Suppose $s = 0$.  If $x_1=0$ or $x_{n-1}=0$, then $x_1=0=x_{n-1}$; assume $x_1 \geq 1, x_{n-1}\geq 1$.  Since $\boldsymbol{x}_{\overline{\lambda}}$ satisfies the last inequality of (\ref{concise}), we also have $x_1+x_{n-1} \leq k$.  When $s> 0$ and either $x_1$ or $x_{n-1}$ is nonzero, by the last inequality of (\ref{concise}), we have $1 \leq x_1+x_{n-1} \leq k-1$.  Hence, by Lemma \ref{WtThLem}, $\boldsymbol{x}_{\overline{\lambda}} \in M_1 \cup M_2 \cup M_3 \cup M_4 \cup M_5$.   Therefore, $kC_{af} \cap (\overline{\Lambda}_s + \overline{Q}) = \{ \overline{\Lambda}_s, \overline{\Lambda}_s + \ \sum_{j=1}^{n-1}x_{j}\alpha_{j} \mid (x_1, x_2, \ldots, x_{n-1}) \in M_1 \cup M_2 \cup M_3 \cup M_4 \cup M_5\}$.

By the bijection, $\lambda = \Lambda + \sum_{j=0}^{n-1}q_{j}\alpha_{j} \in \max(\Lambda) \cap P^{+}$ (with $q_{j} \in \mathbb{Z}_{\leq 0}, 1 \leq j \leq n-1)$ maps to $\overline{\lambda} = \overline{\Lambda}_s + \sum_{j=1}^{n-1}x_{j}\alpha_{j} \in kC_{af} \cap (\overline{\Lambda}_s + \overline{Q})$, where $x_{j} = q_{j} - q_{0}, \  1 \leq j \leq n-1$  (see \cite{Tsu}).  Hence $(q_{1}, q_{2}, \ldots , q_{n-1}) = ( x_{1}+q_{0}, x_{2} + q_{0}, \ldots , x_{n-2} + q_{0}, x_{n-1} + q_{0}). $  Let $\ell = \max\{x_i \mid 1 \leq i \leq n-1 \}$.  Suppose $x_t = \ell$.  Then $q_0 = -\ell - r,$ where $r = - q_t \geq 0$.  Suppose $r > 0$.  Then $\lambda + \delta = \Lambda + \sum_{j=0}^{n-1}(q_{j}+1)\alpha_{j} =  \Lambda + (-\ell - (r-1))\alpha_0 + \sum_{j=1}^{n-1}(x_j - \ell - (r-1))\alpha_{j} \leq \Lambda,$ since $x_j \leq \ell, 1 \leq j \leq n-1$.  Notice that $\lambda + \delta \in P^+$.    Hence, by (\cite{Kac}, Proposition 12.5), $\lambda + \delta$ is a weight of $V(\Lambda)$ which is a contradiction since $\lambda \in \max(\Lambda)$.  Therefore $r=0$ and $\lambda = \Lambda - \ell \alpha_{0} - (\ell-x_{1})\alpha_{1} - (\ell-x_{2})\alpha_{2} - \ldots - (\ell-x_{n-1})\alpha_{n-1}.$  Thus, $\lambda \in \{\Lambda\} \cup  W_1 \cup W_2 \cup W_3 \cup W_4 \cup W_5$.  
\end{proof}


\begin{remark}  Note that by the symmetry of the Dynkin diagram, we also have a description of $\max(\Lambda) \cap P^+$ for all $\Lambda = (k-1)\Lambda_i + \Lambda_{s+i}$, $0 \leq i \leq n-1$, $0 \leq s \leq n-1.$
\end{remark}


Consider the case $k=2$.  When $s=0$, we have $x_1=1$, $x_{n-1}=1$ and 
\begin{equation} M_5(0,n: 1,1) = \left \{ (1, 2, \ldots, \ell_5 -1, \left .  \overbrace{\ell_5, \ell_5, \ldots, \ell_5}^{n-2\ell_5+1} , \ell_5 -1 , \ldots , 1)  \right \vert 1 \leq \ell_5 \leq \left \lfloor \frac{n}{2} \right \rfloor  \right \}. \notag \end{equation}
 
 When $s > 0$, we have the cases $x_1=0, x_{n-1}=1$ and $x_1=1,x_{n-1}=0$.  If $x_1=0$ and $x_{n-1}=1$, $x_1 = x_2 = \ldots = x_s = 0$.  Thus, the maximum $x_i$ must occur to the right of position $s$ and the value $x_s$ is repeated.  Thus $M_3(s,n: 0,1) = M_5(s,n: 0, 1) = \emptyset$ and  
\begin{align} M_4(s,n: 0,1) =& \{ (0,0,\ldots, \overset{s}{0}, 1, 2, \ldots, \notag \\ & \qquad\ell_4 -1, \overbrace{\ell_4, \ell_4, \ldots, \ell_4}^{n-s-2\ell_4+1}, \ell_4 -1 , \ldots , 1)  \vert 1 \leq \ell_4 \leq \left \lfloor \frac{n-s}{2} \right \rfloor \}. \notag \end{align}  Similarly, when $x_1=1, x_{n-1}=0$, $M_1(s,n:1,0) = M_5(s,n: 0, 1)  = \emptyset$ and 
\begin{align}M_2(s,n:1,0)=& \{ (1,2,\ldots,  \overbrace{\ell_2, \ell_2, \ldots, \ell_2}^{s-2\ell_2+1},\notag \\ & \qquad \ell_2 -1, \ell_2 -2, \ldots , 2, 1, \overset{s}{0}, 0 \ldots 0 ) \vert 1 \leq \ell_2 \leq \left \lfloor \frac{s}{2} \right \rfloor \}. \notag \end{align}

Hence, in this case, we have $W_1 = \emptyset, W_3 = \emptyset$, and 
\begin{align}
W_2 =& \{ 2\Lambda_0 - \ell_2 \alpha_0 - ((\ell_2 - 1)\alpha_{1} + (\ell_2 - 2) \alpha_{2} + \cdots + \alpha_{\ell_2 - 1}  \notag \\
           & \qquad \qquad + \alpha_{s-\ell_2+1} + 2 \alpha_{s-\ell_2+2} + \cdots +  (\ell_2 - 2)\alpha_{s-2} + (\ell_2 - 1)\alpha_{s-1} \notag \\         
           & \qquad \qquad+ \ell_2 \alpha_s + \cdots + \ell_2 \alpha_{n-1}) \mid 1 \leq \ell_2 \leq \left \lfloor \frac{s}{2} \right \rfloor  \}, \notag \\
W_4 =& \{ 2\Lambda_0 - \ell_4 \alpha_0 - (\ell_4 \alpha_1 + \cdots + \ell_4 \alpha_s \notag \\
	& \qquad \qquad + (\ell_4 - 1)\alpha_{s+1} + (\ell_4 - 2) \alpha_{s+2} + \cdots + \alpha_{\ell_4 + s - 1} \notag \\
	&\qquad \qquad + \alpha_{n-\ell_4+1} + \cdots +  (\ell_4 - 2)\alpha_{n-2} + (\ell_4 - 1)\alpha_{n-1}) \mid \notag \\ & \qquad \qquad 1 \leq \ell_4 \leq \left \lfloor \frac{n-s}{2} \right \rfloor \}, \text{and} \notag \\
W_5 =& \{ 2\Lambda_0 - \ell_5 \alpha_0 - ( (\ell_5 - 1)\alpha_{1} + (\ell_5 - 2) \alpha_{2} + \cdots + \alpha_{\ell_5 - 1} \notag \\
	&\qquad \qquad + \alpha_{n-\ell_5+1} + \cdots +  (\ell_5 - 2)\alpha_{n-2} + (\ell_5 - 1)\alpha_{n-1}) \mid 1 \leq \ell_5 \leq \left \lfloor \frac{n}{2} \right \rfloor \}.\notag 
\end{align}

Therefore, we have the following Corollary which agrees with the result in \cite{Tsu} when $n$ is prime.  

\begin{corollary}  \label{WtThCor} Let $n \geq 2, 0 \leq s \leq n-1$, $\Lambda = \Lambda_0 + \Lambda_s$.  Then 
$$\max(\Lambda) \cap P^+ = \{\Lambda\} \cup
\begin{cases}
W_2 \cup W_4, &\text{if } s > 0, \\
W_5, &\text{if } s = 0. 
\end{cases} $$
\end{corollary}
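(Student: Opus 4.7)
The plan is to specialize Theorem \ref{WtTh} to $k=2$ and check which of the sets $M_j(s,n:x_1,x_{n-1})$ actually contribute. The theorem already gives
\[
\max(\Lambda) \cap P^+ = \{\Lambda\} \cup W_1 \cup W_2 \cup W_3 \cup W_4 \cup W_5,
\]
so the task reduces to enumerating admissible $(x_1, x_{n-1})$ at level $2$ and simplifying the surviving tuples.

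First, I would list the admissible pairs. For $s=0$, the constraints $x_1, x_{n-1} \ge 1$ and $x_1 + x_{n-1} \le 2$ force $(x_1, x_{n-1}) = (1,1)$. For $s>0$, the constraints $x_1, x_{n-1} \ge 0$ and $x_1 + x_{n-1} \le 1$ give only the two pairs $(1,0)$ and $(0,1)$.

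Next, I would check emptiness. The initial segment $(x_1, \ldots, x_p) \in \mathcal{I}(1, x_1: p, 1)$ appearing in the definitions of $M_1$ and $M_2$ cannot accommodate $x_1 = 0$ together with a later value $\ell_j \ge 1$, since the allowed step sizes lie in $\{1, \ldots, x_1\}$, which is empty when $x_1=0$. Hence $M_1(s,n:0,1) = M_2(s,n:0,1) = \emptyset$; by the symmetry $M_3 = M_1(n-s,n:\cdot, \cdot)$ and $M_4 = M_2(n-s,n:\cdot, \cdot)$, the same argument yields $M_3(s,n:1,0) = M_4(s,n:1,0) = \emptyset$. For $M_5$ with $s>0$, the range $\max\{x_1, x_{n-1}\} \le \ell_5 \le \min\{s x_1, (n-s) x_{n-1}\}$ is infeasible as soon as $x_1 = 0$ or $x_{n-1} = 0$. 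This leaves $M_5$ (from $(1,1)$) as the only nonempty set when $s=0$, and $M_2$ (from $(1,0)$) together with $M_4$ (from $(0,1)$) when $s>0$.

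Finally, I would substitute into the explicit formulas. For $M_5(0,n:1,1)$, Lemma \ref{upperell} with $c = d = 1$ and the congruence $n \equiv m_5 \pmod 2$ simplifies the upper bound on $\ell_5$ to $\lfloor n/2 \rfloor$, producing the palindromic increase-plateau-decrease sequences displayed just before the corollary. Analogous reductions give $\lfloor s/2 \rfloor$ for $M_2(s,n:1,0)$ and $\lfloor (n-s)/2 \rfloor$ for $M_4(s,n:0,1)$. Translating each surviving tuple via $W_j = \{\Lambda - \ell_j \alpha_0 - \sum_i (\ell_j - x_i)\alpha_i\}$ yields the stated descriptions of $W_2$, $W_4$, and $W_5$. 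The main step requiring care is the floor-function bookkeeping in these simplifications; once those are verified, the corollary follows by collecting cases.
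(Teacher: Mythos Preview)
Your approach is essentially the paper's own: specialize Theorem~\ref{WtTh} to $k=2$, list the admissible pairs $(x_1,x_{n-1})$, and determine which $M_j$ survive. However, your emptiness check is incomplete. For $s>0$ you verify that $M_1,M_2$ vanish at $(0,1)$, that $M_3,M_4$ vanish at $(1,0)$, and that $M_5$ vanishes at both --- but you never rule out $M_1(s,n:1,0)$ or its mirror $M_3(s,n:0,1)$. These cases have $x_1=1$ (resp.\ $x_{n-1}=1$), so your ``step sizes in $\{1,\dots,x_1\}$ is empty'' argument does not apply. They \emph{are} empty, but for a different reason: in the definition of $M_1$ the bound $x_s \le x_{n-1}(n-s)=0$ forces $x_s\le 0$, while the lower bound $x_s \ge x_{n-1}+n-s-1=n-s-1$ (and the requirement $t_1\ge 1$, which already forces $x_s>x_{s+1}\ge 0$) forces $x_s\ge 1$ whenever $s<n-1$; the boundary case $s=n-1$ dies by the note $t_1=x_{n-1}=0$ contradicting $t_1\ge 1$. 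The paper handles this instead via the structural observation that $x_{n-1}=0$ forces $x_s=x_{s+1}=\cdots=x_{n-1}=0$, so $x_s$ repeats and the maximum lies left of $s$, placing any such tuple in $M_2$ rather than $M_1$. Either route closes the gap; once you add one of them, your argument matches the paper's.
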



We have the following conjecture for the number of the maximal dominant weights of the $\widehat{sl}(n)$-module $V(k\Lambda_0)$ for $k \geq 1$, $n \geq 2$.

\begin{conjecture} \label{Wtconj} For fixed $n \geq 2$, the number of maximal dominant weights of the $\widehat{sl}(n)$-module $V(k\Lambda_0)$ is

$$ \ds \frac{1}{n+k} \sum_{d | gcd(n, k)} \phi(d) {\frac{n+k}{d} \choose \frac{k}{d}}, $$

where $\phi$ is the Euler phi function.  

\end{conjecture}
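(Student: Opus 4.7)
My plan is to use Theorem \ref{WtTh} to recast the count as an enumeration of integer partitions with a divisibility constraint, and then apply a roots-of-unity filter together with the Reiner--Stanton--White cyclic sieving phenomenon for $q$-binomial coefficients.

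Specializing Theorem \ref{WtTh} to $s=0$ makes $M_1,M_2,M_3,M_4$ empty, so the maximal dominant weights of $V(k\La_0)$ are in bijection with the tuples $(x_1,\ldots,x_{n-1})\in\ZZ^{n-1}$ (with the convention $x_0=x_n=0$) satisfying $(\mathring{A}\bsx)_i\ge 0$ for $1\le i\le n-1$ and $x_1+x_{n-1}\le k$, the zero tuple corresponding to $\La$ itself. Setting $d_i=x_i-x_{i-1}$, Lemma \ref{pattern} yields $d_1\ge d_2\ge\cdots\ge d_n$ with $\sum_i d_i=0$ and $d_1-d_n=x_1+x_{n-1}\le k$; then $e_i:=d_i-d_n\ge 0$ defines a partition $\la=(e_1,\ldots,e_{n-1})$ (with implicit $e_n=0$) fitting inside the $(n-1)\times k$ rectangle and satisfying $|\la|=-nd_n$, so $n\mid|\la|$. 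The construction is reversible, so the count of maximal dominant weights equals
\[
N(n,k)\;:=\;\#\{\la\subseteq(n-1)\times k\text{ rectangle}:n\mid|\la|\}.
\]

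To compute $N(n,k)$ I would apply a roots-of-unity filter to the generating function $\qbinom{n+k-1}{k}_q=\sum_{\la\subseteq(n-1)\times k}q^{|\la|}$, giving
\[
N(n,k)=\frac{1}{n}\sum_{j=0}^{n-1}\qbinom{n+k-1}{k}_q\Big|_{q=\omega^j},\qquad\omega=e^{2\pi i/n}.
\]
Using the rational identity $\qbinom{n+k-1}{k}_q=\frac{1-q^n}{1-q^{n+k}}\qbinom{n+k}{k}_q$, the numerator vanishes at every $q=\omega^j$; if the denominator is nonzero (i.e.\ $n\nmid jk$) the summand is $0$, and otherwise a single L'H\^opital step gives $\qbinom{n+k-1}{k}_{\omega^j}=\frac{n}{n+k}\qbinom{n+k}{k}_{\omega^j}$. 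The element $\omega^j$ has multiplicative order $e=n/\gcd(n,j)$, and the condition $n\mid jk$ translates to $e\mid\gcd(n,k)$ (since $e\mid n$ always and $n\mid jk$ forces $e\mid k$ by Euclid's lemma). For such $j$, the Reiner--Stanton--White cyclic sieving phenomenon for $\qbinom{n+k}{k}_q$ yields $\qbinom{n+k}{k}_{\omega^j}=\binom{(n+k)/e}{k/e}$. Since each divisor $e\mid\gcd(n,k)$ is the order of exactly $\phi(e)$ of the $\omega^j$, summing produces
\[
N(n,k)=\frac{1}{n+k}\sum_{d\mid\gcd(n,k)}\phi(d)\binom{(n+k)/d}{k/d},
\]
which is exactly Conjecture \ref{Wtconj}.

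The main obstacle in this approach is the careful handling of the $0/0$ evaluation---one must verify simplicity of the zeros and confirm that the Reiner--Stanton--White formula applies precisely in the regime $e\mid\gcd(n,k)$. A more direct alternative, sufficient for the low-rank cases $k\le 3$ claimed in the introduction, is to enumerate $|M_5(0,n:x_1,x_{n-1})|$ using the explicit bounds on $\ell_5$ in the definition of $M_5$ and sum over the finitely many valid pairs $(x_1,x_{n-1})$: for $k=2$ this recovers Corollary \ref{WtThCor}, while for $k=3$ the only pairs are $(1,1),(1,2),(2,1)$, each contributing a simple floor expression in $n$ that can be matched term by term against the conjectured closed form.
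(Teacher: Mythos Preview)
Your argument is correct and in fact establishes the conjecture in full generality, whereas the paper only proves it for $k\le 3$. The key insight you add---reparametrizing the concavity conditions $(\mathring{A}\bsx)_i\ge 0$ via the first differences $d_i=x_i-x_{i-1}$ and then shifting to $e_i=d_i-d_n$---converts the count into the number of partitions in an $(n-1)\times k$ box with size divisible by $n$; the reversibility check (that any such partition gives back $x_i\ge 0$) follows from the elementary fact that for a weakly decreasing sequence the average of the first $i$ terms dominates the overall average. From there the roots-of-unity filter and the evaluation $\qbinom{n+k}{k}_\zeta=\binom{(n+k)/e}{k/e}$ for $\zeta$ a primitive $e$th root of unity with $e\mid\gcd(n,k)$ finish the job. (You attribute this evaluation to Reiner--Stanton--White cyclic sieving, but it is precisely the $q$-Lucas theorem and does not require the full CSP framework; citing $q$-Lucas would make the dependence lighter.)

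The paper's approach is quite different and much more limited. It stays inside the explicit parametrization of $M_5(0,n:x_1,x_{n-1})$: for $k=2$ it reads off the $\lfloor n/2\rfloor+1$ weights directly from Corollary~\ref{WtThCor}, and for $k=3$ it establishes a recursion $u_n=2u_{n-1}-u_{n-2}\pm 1$ (the sign depending on $n\bmod 3$) by carefully tracking how the sets $M_5(n:1,1)$ and $M_5(n:1,2)$ grow as $n$ increases, and then checks by induction that this recursion matches the closed form. That method is entirely self-contained but does not generalize; your method imports a standard $q$-series identity but settles the conjecture outright. Your final paragraph about the ``main obstacle'' undersells what you have actually done: the $0/0$ issue is a routine simple-zero cancellation (both $1-q^n$ and $1-q^{n+k}$ have only simple roots), and once the $q$-Lucas evaluation is granted the proof is complete.
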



Clearly the conjecture holds for $k=1$.  We consider the $k=2$ case.  The maximal dominant weights of the $\widehat{sl}(n)$-module $V(2\Lambda_0)$ are described in Corollary \ref{WtThCor}.  There is one maximal dominant weight for each value of $\ell_5$, $1 \leq \ell_5 \leq \left \lfloor \frac{n}{2} \right \rfloor$.  Thus, counting $k\Lambda_0$, there are $\left \lfloor \frac{n}{2} \right \rfloor + 1$ maximal dominant weights of $V(2\Lambda_0)$, which agrees with the conjectured formula.

Now we consider the case $k=3$.  The set of maximal dominant weights of the $\widehat{sl}(n)$-module $V(3\Lambda_0)$ is $W_5$, which is in bijection with the set of $(n-1)$-tuples in $U_n = M_5(n:0,0) \cup M_5(n:1,1) \cup M_5(n:1,2) \cup M_5(n:2,1)$, where $M_5(n:x_1, x_{n-1}) = M_5(0,n:x_1, x_{n-1})$.  Let $u_n$ denote the number of maximal dominant weights of $V(3\Lambda_0)$.  Then $u_n = |U_n|$.  Since $|M_5(n:0,0)| = 1$ and $|M_5(n:1,2)| = |M_5(n:2,1)|$, we will focus on counting the tuples in $M_5(n:1,1)$ and $M_5(n:1,2)$.  Any tuple in these sets is of the form $(1, 2, 3, \ldots, \ell, \ell, \ldots, \ell, \ldots, x_{n-2}, x_{n-1} )$, where $x_{n-1} = 1$ or $2$ and $x_{n-1} \leq \ell \leq \left \lfloor \frac{x_{n-1}n}{x_1 + x_{n-1}} \right \rfloor$.  The decrease from $\ell$ to $x_{n-1}$ in the last part of the tuple can be first by steps of one, possibly followed by steps of $x_{n-1}$.


\begin{lemma} For $n \geq 6$, $u_n  = 
\begin{cases}
 2u_{n-1} + u_{n-2} + 1, & \text{ if } n \equiv 0 \text{ or } 2 \pmod{3} \\
  2u_{n-1} + u_{n-2} - 1, & \text{ if } n \equiv 1 \pmod{3}, \
\end{cases}$
\end{lemma}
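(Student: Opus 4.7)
I would begin by decomposing
\[
u_n \;=\; 1 \;+\; |M_5(n:1,1)| \;+\; 2\,|M_5(n:1,2)|,
\]
using $|M_5(n:0,0)|=1$ together with the reversal involution $(x_1,\ldots,x_{n-1})\mapsto(x_{n-1},\ldots,x_1)$, which bijects $M_5(n:1,2)$ with $M_5(n:2,1)$. The first piece $|M_5(n:1,1)|$ is straightforward: each tuple is determined by its peak $\ell\in\{1,\ldots,\lfloor n/2\rfloor\}$ (rise by unit steps to $\ell$, plateau of length $n-2\ell+1$, descent by unit steps), giving $|M_5(n:1,1)|=\lfloor n/2\rfloor$.

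The recursive work concerns $f(n):=|M_5(n:1,2)|$. From the preamble to the lemma, each tuple in $M_5(n:1,2)$ is indexed by a pair $(\ell,b)$ with $2\le \ell\le \lfloor 2n/3\rfloor$ and $\max(0,2\ell-n-1)\le b\le \lfloor(\ell-2)/2\rfloor$, where $b$ counts descent steps of size $2$, $a=\ell-2-2b$ counts unit descent steps, and the plateau has length $p=n-2\ell+b+2$. I would construct two injections $\iota_1,\iota_2\colon U_{n-1}\hookrightarrow U_n$ that insert a repeated entry on the rise-side and on the descent-side of the peak respectively, producing the $2u_{n-1}$ contribution; and one injection $\kappa\colon U_{n-2}\hookrightarrow U_n$ that increments the peak $\ell$ by one while inserting a matching rise-and-descent pair, producing the $u_{n-2}$ contribution. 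A careful inclusion-exclusion on the images of $\iota_1,\iota_2,\kappa$ should then reduce the recurrence to identifying the single boundary tuple responsible for the $\pm 1$ correction.

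\textbf{Main obstacle.} I expect the hardest part to be pinning down the $\pm 1$ correction and its dependence on $n\bmod 3$. The bound $\ell\le\lfloor 2n/3\rfloor$ furnished by Lemma \ref{upperell} increases by $1$ only when $n$ passes a multiple of $3$, so as $n$ is replaced by $n+1$ either an extremal tuple with peak $\lfloor 2n/3\rfloor$ appears in $U_n$ that is outside the images of the three injections (contributing $+1$), or (in the residue class $n\equiv 1\pmod{3}$) a tuple gets double-counted by both $\iota_1$ and $\iota_2$ at the other extreme of the parameter range (contributing $-1$). Verifying rigorously that the three injections exhaust $U_n$ up to exactly one such exceptional tuple, with the correct sign, will require a careful case analysis keyed on $n\bmod 3$ and on whether $b$ attains its lower bound $\max(0,2\ell-n-1)$ or its upper bound $\lfloor(\ell-2)/2\rfloor$ in the parameterization above; Lemma \ref{upperell} should be the main technical tool for tracking these boundary effects.
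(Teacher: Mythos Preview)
There is a genuine problem here, though it is not entirely your fault: the lemma as printed contains a sign error. The correct recurrence (as actually used in the proof of the subsequent lemma, and as one checks from the values $u_4=5$, $u_5=7$, $u_6=10$ in Table~\ref{Un}) is
\[
u_n \;=\; 2u_{n-1} \;-\; u_{n-2} \;\pm\; 1.
\]
Your plan of building two injections $\iota_1,\iota_2\colon U_{n-1}\hookrightarrow U_n$ and a third $\kappa\colon U_{n-2}\hookrightarrow U_n$, with near-disjoint images, is aimed at the literal statement $u_n \approx 2u_{n-1} + u_{n-2}$, and it already fails at the coarsest numerical level: since $u_n < 2u_{n-1}$ for every $n\ge 4$, two injections from $U_{n-1}$ into $U_n$ necessarily have images overlapping in roughly $u_{n-2}$ elements, so any inclusion--exclusion must produce $-u_{n-2}$ from that intersection, not $+u_{n-2}$ from an additional map $\kappa$. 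The boundary $\pm 1$ that you flag as the main obstacle is real but secondary; the primary issue is the sign of the $u_{n-2}$ term.

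The paper's argument obtains the minus sign more directly. It uses a \emph{single} injection $U_{n-1}\hookrightarrow U_n$ (lengthen the plateau by one) and then observes that the ``new'' tuples in $U_n$ outside its image are, up to small corrections, in bijection with the new tuples of $U_{n-1}$: in the $M_5(\cdot:1,2)$ piece, a new tuple at level $n-1$ with at least one descent step of size $2$ yields a new tuple at level $n$ by splitting the leftmost such step into two unit steps. Hence $u_n-u_{n-1}\approx u_{n-1}-u_{n-2}$, giving $u_n\approx 2u_{n-1}-u_{n-2}$. The $\pm 1$ then comes from a bookkeeping table indexed by $n\bmod 6$, tracking (i) when the upper bound on the peak $\ell$ jumps in $M_5(n:1,1)$ (parity of $n$) and in $M_5(n:1,2)$ ($n\bmod 3$), and (ii) when a tuple in $M_5(n-2:1,2)$ has exactly one step of size $2$ and so fails to propagate. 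Your parameterization of $M_5(n:1,2)$ by $(\ell,b)$ is correct and would mesh well with this approach; the fix is to compare $u_n-u_{n-1}$ with $u_{n-1}-u_{n-2}$ rather than to construct $\iota_1,\iota_2,\kappa$.
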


\begin{proof}

First, we observe that any tuple in $U_{n-1}$ corresponds to a tuple in $U_n$ with the only difference being that the number of $\ell$'s exceeds exactly by one.  There are also new tuples in $U_n$ that do not correspond in this way to tuples in $U_{n-1}$; they arise in two different manners.  

One way they arise in $U_n$ is when the upper bound for $\ell$ is increased by one.  Such a tuple appears in $M_5(n:1,1)$ whenever $n$ is even.  Similarly, such a tuple occurs in $M_5(n:1,2)$ when $n \equiv 0 \text{ or } 2 \pmod{3}$. This is summarized in the first two rows of Table \ref{recursion}.  

The other way  new tuples arise in $U_n$ is when there is a tuple in $M_5(n-1:1,2)$ with at least one decrease by a step of two.  Here, the tuple in $M_5(n-1:1,2)$ corresponds to the tuple in $M_5(n:1,2)$ in which the leftmost decrease by a step of two is replaced by two decreases of step one.  A tuple in $M_5(n-2:1,2)$ with more than one decrease by two will correspond to a new tuple in $M_5(n-1:1,2)$ in this manner; this new tuple, in turn, will correspond to a new tuple in $M_5(n:1,2)$ in the same way.  Thus, the number of new tuples in $U_{n-1}$, $u_{n-1} - u_{n-2}$, is close to the number of new tuples we obtain in this way, though we must make some adjustments.   The value $u_{n-1} - u_{n-2}$ will count the new tuple in $M_5(n-1:1,1)$ when $n$ is odd; thus we must subtract one when $n$ is odd.  Additionally, if a tuple in $M_5(n-2:1,2)$ has only one decrease by a step of two, we need to account for this.    Recall that when $n \equiv 0 \text{ or } 2 \pmod{3}$, a new tuple arises in $M_5(n:1,2)$ because the upper bound for $\ell$ has increased.  This tuple has  $\left \lfloor \frac{\ell}{2} \right \rfloor$  decreases by step two.   Because $n$ and $\lfloor \frac{\ell}{2} \rfloor$ have odd/even parity when $n \equiv 0 \text{ or } 2 \pmod{3}$, we see that there is a tuple in $M_5(n-2:1,2)$ with a single decrease by two only when $n$ is even.  Therefore, using the data given in Table \ref{recursion}, we have $u_n = u_{n-1} + (u_{n-1} - u_{n-2}) + a$, which proves the lemma.

\begin{table}[h]
\caption{Recursive Definition of $u_n$, $n \geq 6$}
\label{recursion}
\begin{tabular}{|p{4 in}|c|c|c|c|c|c|} \cline{2-7}
\multicolumn{1}{c|}{  } & \multicolumn{6}{c|}{$n \pmod{6}$} \\ \cline{2-7}
\multicolumn{1}{c|}{  } & 0 & 1 &2 & 3 & 4 & 5 \\ \hline
(1) number of new tuples in $M_5(n:1,1)$ that arise because the upper bound for $\ell$ increases & 1 & 0 & 1 & 0 & 1 & 0 \\ \hline
(2) twice the number of new tuples in $M_5(n:1,2)$ that arise because the upper bound for $\ell$ increases & 2 & 0 & 2 & 2 & 0 & 2 \\ \hline
(3) number of new tuples in $M_5(n-1:1,1)$ & 0 & 1 & 0 & 1 & 0 & 1 \\ \hline
(4) twice the number of tuples  in $M_5(n-2:1,2)$ with exactly one decrease by 2 & 2 & 0 & 2 & 0 & 2 & 0 \\ \hline
$a = (1) + (2) - (3) - (4)$   & 1 & -1 & 1 & 1 & -1 & 1 \\ \hline
\end{tabular}
\end{table}
\end{proof}

\begin{table}[h]
\caption{$U_n$, $n=2,3,4,5,6$}
\label{Un}
\begin{tabular}{|c|c|c|c|c|c|} \hline
$n$ & $M_5(n:0,0)$ & $M_5(n:1,1)$ & $M_5(n:1,2)$ & $M_5(n:2,1)$ & $u_n$ \\  \hline
2 & (0)  & (1) & - & - & 2 \\ \hline
3 & (0,0) & (1,1) & (1,2) & (2,1) & 4 \\ \hline
4 & (0,0,0) & (1,1,1) & (1,2,2) & (2,2,1) & 5 \\
   &             & (1,2,1) &    & & \\ \hline
5 & (0,0,0,0) & (1,1,1,1) & (1,2,2,2) & (2,2,2,1) & 7 \\
   &             & (1,2,2,1) &  (1,2,3,2)  & (2,3,2,1) & \\ \hline
6 & (0,0,0,0,0) & (1,1,1,1,1) & (1,2,2,2,1) & (2,2,2,2,1) & 10 \\
   &             & (1,2,2,2,1) &  (1,2,3,3,2)  & (2,3,3,2,1) & \\ 
   &             & (1,2,3,2,1) &  (1,2,3,4,2)  & (2,4,3,2,1) & \\ \hline
\end{tabular}
\end{table}

\begin{lemma} For $n \geq 2$, $ \ds  u_n = \frac{1}{n+3} \sum_{d | gcd(n, 3)} \phi(d) {\frac{n+3}{d} \choose \frac{3}{d}}$.  
\end{lemma}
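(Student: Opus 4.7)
The plan is to prove the identity by induction on $n$, using the recursion established in the preceding lemma and matching base cases. First I would obtain an explicit closed form for the right-hand side. Denote it by $v_n$. When $3 \nmid n$, $\gcd(n,3) = 1$ and $v_n = \frac{1}{n+3}\binom{n+3}{3} = \frac{(n+1)(n+2)}{6}$. When $3 \mid n$, the sum picks up the extra term $\frac{\phi(3)\binom{(n+3)/3}{1}}{n+3} = \frac{2}{3}$, giving $v_n = \frac{(n+1)(n+2)}{6} + \frac{2}{3}$.

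Next, writing $P(n) = \frac{(n+1)(n+2)}{6}$, a one-line computation gives $P(n) - 2P(n-1) + P(n-2) = \frac{1}{3}$ for all $n \geq 2$. Among any three consecutive integers exactly one is divisible by $3$, so the $\frac{2}{3}$ adjustment appears in exactly one of $v_n, v_{n-1}, v_{n-2}$. When $n \equiv 0 \pmod 3$ it appears in $v_n$ with sign $+1$; when $n \equiv 1 \pmod 3$ it appears in $v_{n-1}$ with sign $-2$; and when $n \equiv 2 \pmod 3$ it appears in $v_{n-2}$ with sign $+1$. Combining these contributions with $P(n)-2P(n-1)+P(n-2)=\tfrac{1}{3}$ yields $v_n - 2v_{n-1} + v_{n-2} = 1$ if $n \equiv 0$ or $2 \pmod 3$ and $v_n - 2v_{n-1} + v_{n-2} = -1$ if $n \equiv 1 \pmod 3$. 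This is exactly the recurrence for $u_n$ derived in the proof of the preceding lemma, namely $u_n = 2u_{n-1} - u_{n-2} + a$ with $a$ as in Table \ref{recursion}.

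Finally, I would check the base cases $n=2,3,4,5$, since the recurrence is only asserted for $n \geq 6$. Table \ref{Un} gives $u_2 = 2$, $u_3 = 4$, $u_4 = 5$, $u_5 = 7$, and the closed-form evaluations are $v_2 = \frac{3 \cdot 4}{6} = 2$, $v_3 = \frac{4 \cdot 5}{6} + \frac{2}{3} = 4$, $v_4 = \frac{5 \cdot 6}{6} = 5$, $v_5 = \frac{6 \cdot 7}{6} = 7$. The induction is then complete.

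Since the combinatorial heavy lifting is already done in the recurrence of the previous lemma, there is no serious obstacle in this step. The argument reduces to verifying one polynomial identity for $P(n)$ together with careful bookkeeping of when the $\frac{2}{3}$ correction term appears in $v_n$, $v_{n-1}$, or $v_{n-2}$. The only mild pitfall is making sure the signs on the $\frac{2}{3}$ contributions are tracked correctly in each of the three residue cases modulo $3$.
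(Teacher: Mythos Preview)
Your proof is correct and follows essentially the same approach as the paper: induction on $n$ using the recurrence from the preceding lemma, together with the base cases from Table~\ref{Un}. The only difference is presentational: the paper computes the three residue classes modulo $3$ separately (writing out only $n\equiv 0$ and declaring the others similar), whereas you split $v_n$ into the polynomial part $P(n)$ plus a $\tfrac{2}{3}$ correction and handle all three cases uniformly via the identity $P(n)-2P(n-1)+P(n-2)=\tfrac{1}{3}$. This is a minor streamlining but not a different argument.
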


\begin{proof}  From Table \ref{Un}, we see that the statement is true for $n=2,3,4,5,6$.  To prove the statement for $n\geq 6$, we will use induction on $n$.  Assume that the statement holds for all $u_m$ with $m < n$.  We will first prove the case $n \equiv 0 \pmod{3}$.  By induction,    $u_n = 2u_{n-1} - u_{n-2} + 1 = \frac{2}{n+2} {n+2 \choose 3} - \frac{1}{n+1} {n+1 \choose 3} + 1 = \frac{2(n+2)(n+1)n}{6(n+2)} - \frac{(n+1)n(n-1)}{6(n+1)} + 1 = \frac{n^2 + 3n + 6}{6}$. In this case, $ \ds \frac{1}{n+3} \sum_{d | gcd(n, 3)} \phi(d) {\frac{n+3}{d} \choose \frac{3}{d}}$ = $\frac{1}{n+3}\left ( {n+3 \choose 3} + 2 \left ( \frac{n+3}{3} \right ) \right ) = \frac{1}{n+3}\left ( \frac{(n+3)(n+2)(n+1)}{6}+ 2 \left ( \frac{n+3}{3} \right ) \right ) = \frac{n^2+3n+6}{6}.$  Hence the statement holds for all $n \geq 6$, $n \equiv 0 \pmod{3}$.  The proof for  the cases $n \equiv 1, 2 \pmod{3}$, $n \geq 6$ are similar.  
\end{proof}



Using MATLAB, we have verified that Conjecture \ref{Wtconj} holds for  $n \leq 20$ and $k \leq 10$.   Observe that $\{k \Lambda_0 - \gamma_\ell \mid 1 \leq \ell \leq \left \lfloor \frac{n}{2} \right \rfloor \} \subseteq \max(k \Lambda_0) \cup P^+$, where $\gamma_{\ell} = \ell \alpha_0 + (\ell - 1)\alpha_{1} + (\ell - 2) \alpha_{2} + \cdots + \alpha_{\ell - 1} + \alpha_{n-\ell+1} + \cdots +  (\ell - 2)\alpha_{n-2} + (\ell - 1)\alpha_{n-1}$.  In the next section we study the multiplicities of these maximal dominant weights of $V(k\Lambda_0)$.

%


%
\section{Multiplicity of weights $k\Lambda_0 - \gamma_\ell$ in $V(k\Lambda_0)$} \label{mults}
In this section, we use the explicit realization of the crystal base of $V(k\Lambda_0)$ in terms of extended Young diagrams, given in \cite{JMMO}, to study the multiplicity of the maximal dominant weights  $\{ k\Lambda_0 - \gamma_\ell \mid 1 \leq \ell \leq \lfloor \frac{n}{2} \rfloor \}$.

An extended Young diagram $Y = (y_{i})_{i \geq 0}$ is a weakly increasing sequence with integer entries such that there exists some fixed $y_{\infty}$ such that $y_{i} = y_{\infty}$ for $i  \gg  0$.  $y_{\infty}$ is called the charge of $Y$.  Associated with each sequence $Y = (y_{i})_{i\geq 0}$ is a unique diagram in the $\mathbb{Z} \times \mathbb{Z}$ right half lattice.  For each element $y_{i}$ of the sequence, we draw a column with depth $y_{\infty}$ - $y_{i}$, aligned so the top of the column is on the line $y = y_{\infty}$.  We fill in square boxes for all columns from the depth to the charge and obtain a diagram with a finite number of boxes.  We color a box with lower right corner at $(a,b)$ by color $j$, where $(a+b) \equiv j \pmod{n}$.   For simplicity, we refer to color $(n-j)$ by $-j$.   The weight of an extended Young diagram of charge $i$ is $ wt(Y) = \Lambda_{i} - \sum_{j=0}^{n-1}c_{j}\alpha_{j},$ where $c_{j}$ is the number of boxes of color $j$ in the diagram.  We denote $Y[n] = (y_{i} + n)_{i \geq 0}$.
\begin{example}  

The extended young diagram $Y = (-4, -4, -3, -2, -2, 0, 0 , 0, \ldots)$ is colored as in Fig. \ref{exYD} and is of  wt($Y$) =   $\Lambda_{0} - 3 \alpha_{0} - 2 \alpha_{1} - 2 \alpha_{2} - 2 \alpha_3 - \alpha_4 -  \alpha_{n-3} - 2 \alpha_{n-2} - 2 \alpha_{n-1}$
(for any $n \geq 5$). 

\begin{figure}[h]
$$  \tableau{0 & 1 & 2 & 3& 4\\ -1&0 &1 &2 & 3\\ -2&-1 &0 \\ -3& -2} $$
\caption{Extended Young Diagram representation of $Y = (-4, -4, -3, -2, -2, 0, 0 , 0, \ldots)$  }
\label{exYD}       
\end{figure}
\end{example}

The weight of a $k$-tuple of extended Young diagrams $\Y = (Y_1, Y_2, \ldots , Y_k)$ is $ wt(\Y) = \sum_{i=1}^{k}wt(Y_{i})$.   Let $\mathcal{Y}(k\Lambda_0)$ denote the set of all $k$-tuples of extended Young diagrams of charge zero.  We have the following realization of the crystal for $V(k\Lambda_0)$.

\begin{theorem} \label{JMMOTh} \cite{JMMO}  Let $V(k\Lambda_{0})$ be an $\widehat{sl}(n)$-module and let $B(k\Lambda_{0})$ be its crystal.  Then $B(k\Lambda_{0}) = \{ \Y = (Y_{1}, \ldots , Y_{k}) \in \mathcal{Y}(k\Lambda_0) \mid Y_{1} \supseteq Y_{2} \supseteq \cdots \supseteq Y_{k}  \supseteq Y_1[n], \text{and for each } i \geq 0, \exists \  j \geq 1 \text{ s.t. } (Y_{j+1})_{i} > (Y_{j})_{i+1} \}$.
\end{theorem}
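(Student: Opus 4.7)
The plan is to combine a known level-one Fock-space realization with a tensor-product description of $B(k\Lambda_0) \subseteq B(\Lambda_0)^{\otimes k}$. First I would dispose of the $k=1$ case: verify that single extended Young diagrams of charge zero, colored diagonally mod $n$, carry the crystal structure of $B(\Lambda_0)$. The weight formula reduces to box-counting, and the Kashiwara operators $\fit, \eit$ act by adding or removing the unique $i$-colored box selected by the usual signature rule read along the staircase boundary. This is checked either directly, by showing that the resulting graph is connected with the right highest weight, or by matching the principal specialization of the generating function with the Weyl--Kac character of $V(\Lambda_0)$.

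For general $k$, I would realize $V(k\Lambda_0)$ as the irreducible submodule of $V(\Lambda_0)^{\otimes k}$ generated by the $k$-fold tensor power of the highest weight vector. On the crystal side, $B(k\Lambda_0)$ is then the connected component of $B(\Lambda_0)^{\otimes k}$ containing $(\emptyset, \emptyset, \ldots, \emptyset)$. The theorem is proved once this connected component is identified combinatorially with the set of tuples in the statement.

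The stated conditions arise from Kashiwara's tensor product rule. The containment $Y_1 \supseteq Y_2 \supseteq \cdots \supseteq Y_k$ encodes that, starting from empty diagrams and applying $\fit$'s, the signature rule forces any newly added $i$-box to land in the leftmost factor that can accept it, so a box appearing in $Y_{j+1}$ forces a corresponding box in $Y_j$. The wraparound $Y_k \supseteq Y_1[n]$ comes from how $\fit$ with $i=0$ reenters the picture shifted by $n$ across the affine boundary. The reducedness requirement--for each $i$, some $j$ with $(Y_{j+1})_i > (Y_j)_{i+1}$--is a canonical-form condition singling out a unique representative for each crystal element; without it, tuples differing by an overall ``slide'' of common columns from $Y_j$ into $Y_{j-1}$ would describe the same vector, destroying injectivity.

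The main obstacle will be proving that these combinatorial conditions exactly cut out the connected component. This splits into three tasks: (i) \emph{closure}, showing that any $\fit$ applied to a tuple in the described set either yields $0$ or a tuple still in the set; (ii) \emph{connectedness}, showing that every tuple reduces to $(\emptyset,\ldots,\emptyset)$ under iterated $\eit$'s, by induction on the total box count, with the reducedness condition ensuring at least one genuinely removable $i$-corner at each step; and (iii) a character comparison, computing $\sum_{\Y} q^{|\Y|} e^{\wt(\Y)}$ over the described set and matching it to the known character of $V(k\Lambda_0)$. Step (i) carries the bulk of the combinatorial work, as it requires a careful case analysis of how $i$-signatures interact across adjacent diagrams and across the $[n]$-shift that links $Y_k$ back to $Y_1$.
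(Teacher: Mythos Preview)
The paper does not contain a proof of this theorem; it is stated with attribution to \cite{JMMO} (Jimbo--Misra--Miwa--Okado) and invoked as a black box in the proof of the subsequent multiplicity result, Theorem~\ref{PathsTh}. There is therefore no proof in the paper to compare your proposal against.

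For what it is worth, your outline---realize $B(\Lambda_0)$ by single extended Young diagrams, embed $B(k\Lambda_0)$ as the connected component of $(\emptyset,\ldots,\emptyset)$ in $B(\Lambda_0)^{\otimes k}$ via Kashiwara's tensor-product rule, then identify that component by the closure/connectedness/character steps you list---is a reasonable modern approach to the result. The original argument in \cite{JMMO} is organized somewhat differently: it works directly with the $q \to 0$ limit of the Fock-space action of $U_q(\widehat{sl}(n))$ and extracts the crystal from that, rather than assembling it from level-one pieces through the abstract tensor-product formalism.
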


\begin{remark}  Let $B(k\Lambda_0)_\mu$ denote the set of $\Y \in B(k\Lambda_0)$ such that wt$(\Y) = \mu$.  Then mult$_{k\Lambda_0}(\mu) = |B(k\Lambda_0)_\mu|$.
\end{remark}

Now we consider $Y =(\overbrace{-\ell, - \ell, -\ell, \ldots, -\ell}^{\ell}, 0, 0, \ldots)$, an extended Young diagram which we shift up $\ell$ units to form an $\ell \times \ell$ square in the first quadrant (see Fig. \ref{thesquare}).    In particular, the bottom left corner now has coordinates $(0,0)$.
\begin{figure}[h]
\centering
  $$ \normalsize \tableau{0 & 1 &  \cdots & \ell -1 \\ -1&  0 & \cdots & \ell -2\\  \vdots  & \vdots &  \ddots & \vdots \\ 1-\ell  &  2-\ell  & \cdots & 0}$$
 \caption{$Y$, the $\ell \times \ell$ extended Young diagram}
\label{thesquare}
 \end{figure}
We draw a sequence of $(k-1)$ lattice paths, $p_1, p_2, \ldots, p_{k-1}$, from the lower left to upper right corner of the square, moving only up and to the right in such a way that for each color, the number of colored boxes of that same color below  $p_i$ is greater than or equal to the number of colored boxes of that same color below $p_{i-1}$.  Take $t_i^j, i \geq 2$ to be the number of $j$-colored boxes between $p_{i-1}$ and $p_{i-2}$.  Note that $t_2^j$ is the number of boxes of color $j$ below $p_1$.  

\begin{definition}  \label{pathsdef} We call such a sequence of lattice paths $p_1, p_2, \ldots, p_{k-1}$ admissible if it satisfies the following conditions:  
\begin{enumerate}
\item \label{conds} the first path, $p_1$, must be drawn so that it does not cross the diagonal $y=x$, and 
\item for  $i$ such that $3 \leq i \leq k-1$,
\begin{enumerate}
\item $t_i^j \leq \min \left \{t_{i-1}^j, \ell - |j| - t_2^j - \ds \sum_{a=2}^{i-1}t_a^j \right \}$,
\item for $j > 0, t_i^j \leq t_i^{j-1} \leq t_i^{j-2} \leq \ldots \leq t_i^{1} \leq t_i^0$ and for $j < 0, t_i^j \leq t_i^{j+1} \leq t_i^{j+2} \leq \ldots \leq t_i^{-1} \leq t_i^0$.
\end{enumerate}
\end{enumerate}
\end{definition}
Denote by $\mc{T}_\ell^k$ the set of admissible sequences of $(k-1)$ paths in an $\ell \times \ell$ square.  

\begin{example}\label{pathseqex}
Fig. \ref{pathseq}a is an element of $\mc{T}_4^3$, where $p_1$ and $p_2$ are shown in Fig. \ref{pathseq}b and Fig. \ref{pathseq}c, respectively.  Notice that between $p_1$ and $p_2$, there is one box of color 0.
\begin{figure*}[h]
\centering
\begin{tabular}{ccc}
 \includegraphics{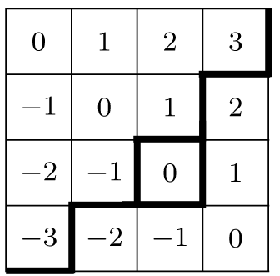}  & \includegraphics{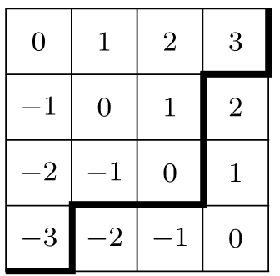}  & \includegraphics{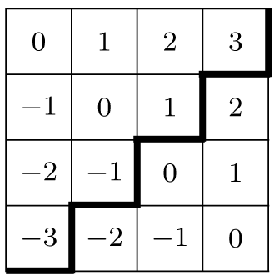}\\
 a & b & c\\
\end{tabular}
 \caption{Admissible Sequence of Paths}
 \label{pathseq}
\end{figure*}
\end{example}

\begin{theorem}\label{PathsTh}  Consider the maximal dominant weights $k\Lambda_0 - \gamma_\ell \in \max(k\Lambda_0) \cap P^+,$ where $1 \leq \ell \leq \lfloor \frac{n}{2} \rfloor$.  The multiplicity of $k\Lambda_0 - \gamma_\ell$ in $V(k\Lambda_0)$ is equal to $|\mathcal{T}_{\ell}^{k}|$.
\end{theorem}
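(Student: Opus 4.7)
By Theorem \ref{JMMOTh} and the subsequent remark, $\mathrm{mult}_{k\Lambda_0}(k\Lambda_0 - \gamma_\ell) = |B(k\Lambda_0)_{k\Lambda_0-\gamma_\ell}|$, so it suffices to construct a bijection between the set of $k$-tuples $\Y = (Y_1, \ldots, Y_k) \in B(k\Lambda_0)$ with $\wt(\Y) = k\Lambda_0 - \gamma_\ell$ and the set $\mc{T}_\ell^k$. The plan proceeds in three stages: (a) show that the weight constraint forces $Y_1$ to be exactly the $\ell \times \ell$ square of Figure \ref{thesquare}; (b) encode the nested chain $Y_1 \supseteq Y_2 \supseteq \cdots \supseteq Y_k$ by $k - 1$ up-right lattice paths $p_1, \ldots, p_{k-1}$ inside that square; and (c) verify that the JMMO conditions together with the weight equation translate exactly into the admissibility conditions of Definition \ref{pathsdef}.

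For stage (a), the coefficients of $\gamma_\ell$ are supported only on the ``window'' of $2\ell - 1$ consecutive colors $\{n-\ell+1, \ldots, n-1, 0, 1, \ldots, \ell - 1\}$ modulo $n$, so no box in any $Y_i$ may have a color outside this window. Since $\ell \leq \lfloor n/2 \rfloor$, this window is a proper arc of $\mathbb{Z}/n\mathbb{Z}$, and any $Y_i$ not contained in the $\ell \times \ell$ square would contain a box of a forbidden color; hence each $Y_i$ lies inside the square. The total box count $\sum_j c_j = \ell^2$ equals the size of the full square, and combined with $Y_1 \supseteq Y_i$ for all $i$ and the boundary condition $Y_k \supseteq Y_1[n]$, this forces $Y_1$ to be exactly the $\ell \times \ell$ square. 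For stage (b), each $Y_{i+1}$ is then a partition-shaped sub-diagram of the square, and its boundary inside the square is an up-right lattice path $p_i$ from $(0,0)$ to $(\ell,\ell)$; the count $t_i^j$ of Definition \ref{pathsdef} then records the number of color-$j$ boxes in $Y_{i-1} \setminus Y_i$, with $t_2^j$ counting color $j$ in $Y_1 \setminus Y_2$, the region below $p_1$.

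For stage (c), the requirement that $p_1$ does not cross the diagonal $y = x$ (condition (1) of Definition \ref{pathsdef}) corresponds to the JMMO existence criterion ``for each $i \geq 0$ there exists $j \geq 1$ with $(Y_{j+1})_i > (Y_j)_{i+1}$'': when $Y_1$ is the $\ell \times \ell$ square, a column-by-column analysis translates this criterion into the statement that $p_1$ lies weakly on one side of the main diagonal throughout. The monotonicity $t_i^j \leq t_{i-1}^j$ of condition (2)(a) expresses that the skew strips $Y_{i-1} \setminus Y_i$ shrink under further nesting, and the accompanying upper bound encodes the color balance imposed by $\wt(\Y) = k\Lambda_0 - \gamma_\ell$. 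Finally, the chain $t_i^j \leq t_i^{j-1} \leq \cdots \leq t_i^0$ of condition (2)(b) (and its mirror for $j < 0$) reflects the geometric fact that in the $\ell \times \ell$ square the color-$j$ diagonal has length $\ell - |j|$ and sits one unit further from the main diagonal than the color-$(j \mp 1)$ diagonal, so any skew strip between two nested up-right paths meets color $j$ at most as often as color $j \mp 1$. The principal obstacle is this last translation: establishing both necessity and sufficiency of the chain condition requires a careful diagonal-by-diagonal analysis of how nested up-right lattice paths meet the consecutive color diagonals of the $\ell \times \ell$ square. Once this is in hand, the inverse map, reconstructing each $Y_{i+1}$ as the region cut out by $p_i$ for any $(p_1, \ldots, p_{k-1}) \in \mc{T}_\ell^k$, completes the bijection.
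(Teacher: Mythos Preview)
Your stage (a) is false, and this undermines the entire plan. Take $k=2$, $\ell=2$ (so $n\geq 4$), and consider
\[
Y_1=(-2,-1,0,0,\ldots),\qquad Y_2=(-1,0,0,\ldots).
\]
Then $Y_1$ has one box each of colors $-1,0,1$, $Y_2$ has one box of color $0$, so $\wt(\Y)=2\Lambda_0-\gamma_2$. One checks $Y_1\supseteq Y_2\supseteq Y_1[n]$, and the JMMO condition holds by taking $j=2$ (since $(Y_1[n])_i\geq n-2>0\geq (Y_2)_{i+1}$). Thus $\Y\in B(2\Lambda_0)_{2\Lambda_0-\gamma_2}$, yet $Y_1$ is \emph{not} the full $2\times 2$ square. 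Your argument ``total box count is $\ell^2$, hence $Y_1$ is the square'' conflates the total over all $Y_i$ with the size of $Y_1$ alone.

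Because stage (a) fails, your encoding in stage (b) --- taking $p_i$ to be the boundary of $Y_{i+1}$ inside a fixed square $Y_1$ --- does not describe the actual crystal elements. The paper's bijection is genuinely different: the $k-1$ paths cut the square into $k$ regions, the region \emph{above} $p_{k-1}$ is $Y_1$ as is, while for $i\geq 2$ the region between $p_{i-2}$ and $p_{i-1}$ (or below $p_1$ when $i=2$) is \emph{rearranged by color content} into the extended Young diagram $Y_i$. The paths need not be geometrically nested; only the color-count inequalities are required. Your identification in stage (c) is also off: condition (1) of Definition \ref{pathsdef} encodes $Y_1\supseteq Y_2$ (via the reflection of the region below $p_1$), not the JMMO existence criterion, which in fact holds automatically with $j=k$ because $n\geq 2\ell$ forces $(Y_1[n])_i>0\geq (Y_k)_{i+1}$. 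Condition (2b) is what guarantees that the colored boxes between consecutive paths can be reassembled into a genuine Young diagram, a step your nested-path picture sidesteps but which is essential once $Y_1$ is allowed to vary.
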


\begin{proof}
It is enough to show that the elements in $\mc{T}_{\ell}^{k}$ are in one-to-one correspondence with the $k$-tuples of extended Young diagrams in $B(k\Lambda_0)_{k\Lambda_0 - \gamma_\ell}$. 

Let $\T \in \mc{T}_{\ell}^{k}$ be an admissible sequence of $(k-1)$ paths. Recall that the paths in $\T$ are all drawn in an $\ell \times \ell$ square, $Y$.  We construct the  $k$-tuple $\Y = (Y_{1}, \ldots , Y_{k}) $ of extended Young diagrams as follows.  First, we remove the boxes below $p_1$ and use these boxes to uniquely form an extended Young diagram of charge zero, which we will denote by $Y_2$.   Next, we consider the boxes between $p_1$ and $p_2$ in $\T$.  Since for $j > 0, t_3^j \leq t_3^{j-1} \leq t_3^{j-2} \leq \ldots \leq t_3^{1} \leq t_3^0$ and for $j < 0, t_3^j \leq t_3^{j+1} \leq t_3^{j+2} \leq \ldots \leq t_3^{-1} \leq t_3^0$ by Definition \ref{pathsdef}(2b), we can use these boxes to form a unique extended Young diagram $Y_3$ of charge zero.  Now, by Definition \ref{pathsdef}(2a), $t_{2}^j \geq t_3^j$ for all colors $j$, which implies $Y_2 \supseteq Y_3$.    We continue this process until the boxes between $p_{k-1}$ and $p_{k-2}$ have been used to form the extended Young diagram $Y_k$, with $Y_2 \supseteq Y_3 \supseteq \ldots \supseteq Y_k$.  The boxes above $p_{k-1}$  form an extended Young diagram, which we denote by $Y_1$.  Since for each color $j$, $t_i^j \leq  \ell - |j| - t_2^j - \sum_{a=2}^{i-1}t_a^j$, $3 \leq i \leq k-1$ by Definition \ref{pathsdef}(2a), we have $Y_1 \supseteq Y_2$.  Note that  $\Y$ has weight $k\Lambda_0 - \gamma_\ell$ and that $\Y \in   \mathcal{Y}(k\Lambda_0)$.  Now, consider each extended Young diagram as a sequence and use the notation $Y_r = (y_i^{(r)})_{i \geq 0}, 1 \leq r \leq k$.  We define $Y_{k+1} = Y_1[n] = (y_i^{(1)}+n)_{i \geq 0}$.  Since $n \geq 2\ell$, we have $y_i^{(1)}+n > 0$ for all $i \geq 0$.  Note that by definition $(Y_k)_i = y_i^{(k)} \leq 0$ for all $i \geq 0$.    Hence for all $i \geq 0$, $(Y_{k+1})_i > (Y_k)_{i+1}$ and $Y_k \supseteq Y_1[n]$.  Therefore, by Theorem \ref{JMMOTh}, $\Y \in B(k\Lambda_0)_{k\Lambda_0 - \gamma_\ell}$.

Now, let $\Y = (Y_1, Y_2, \ldots , Y_k) \in B(k\Lambda_0)_{k\Lambda_0 - \gamma_\ell}$.   Since $wt(\Y) = k\Lambda_0 - \gamma_\ell$, the total number of boxes in $\Y$ is $\ell^2$.  We need to construct an admissible sequence $\T \in \mc{T}_\ell^k$ of $(k-1)$ lattice paths.  We take $Y$ to be an empty diagram and fill it with the boxes in $\Y$ as follows, maintaining color positions as in Fig. \ref{thesquare}.  We begin by placing $Y_1$ in $Y$, aligning the upper left corners.  Next, we draw a lattice path tracing the right edge of $Y_1$ from the lower left to upper right corner  and take this path to be $p_{k-1}$.  Now, we take the boxes from $Y_k$ and place them in $Y$, placing each box of color $j$ in the leftmost available position for that color.  Since each $Y_i, 1 \leq i \leq k$, is an extended Young diagram and since we have exactly $\ell - |j|$ boxes available of each color, we obtain an extended Young diagram.  Thus, we are able to draw a lattice path along the right edge of the new diagram.  We take this path to be $p_{k-2}$.  Now, we add the boxes of $Y_{k-1}$ in the same manner and draw $p_{k-3}$.   We continue this process until we add in the final boxes of $Y_2$ to make a complete square.  Let $\T$ be the sequence of  $k-1$ lattice paths in the square.   As before, we define $t_i^j$ to be the number of $j$-colored boxes between $p_{i-1}$ and $p_{i-2}$.  Notice that $t_i^j$ is the number of boxes of color $j$ in $Y_i$.  Since each $Y_i$ is an extended Young diagram, Definition \ref{pathsdef}(2b) is satisfied.  Since $Y_{1} \supseteq Y_{2} \supseteq \cdots \supseteq Y_{k}$, Definition \ref{pathsdef}(2a) and Definition \ref{pathsdef}(1) are satisfied.  Hence $\T$ is an admissible sequence of $k-1$ lattice paths, which completes the proof.

\end{proof}

\begin{example}  
We associate the element of $\mc{T}_4^3$ in Fig. \ref{boxesremoved}a with an element of  $B(3\Lambda_0)$ of weight $3\Lambda_0 - \gamma_4$ as follows.  First, we remove the boxes below and to the right of $p_1$ and obtain $Y_1$ as in Fig. \ref{boxesremoved}b and $Y_2$ as the second element in Fig. \ref{boxesremoved}c.     Next, we remove the box that remains below $p_2$ to determine $Y_3$ as in Fig. \ref{boxesremoved}c.  

\begin{figure*}[h]
\centering
\begin{tabular}{cc}
 \includegraphics{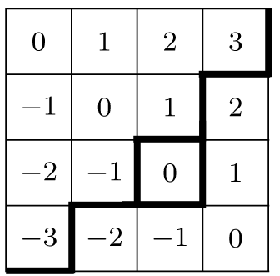}  &\includegraphics{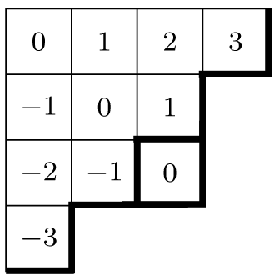} \\ 
 a & b  \\
 \multicolumn{2}{c}
  { \includegraphics[scale=1.1]{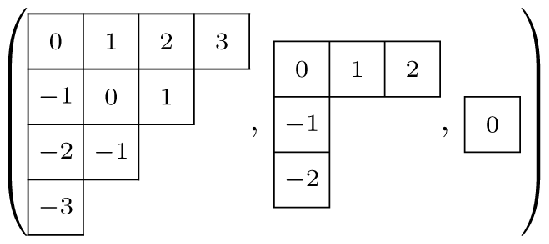} } \\
  \multicolumn{2}{c} {c} 
 
\end{tabular}
\caption{Correspondence Between Sequence of Admissible Paths and Element of $B(k\Lambda_0)_{k\Lambda_0 - \gamma_\ell}$}
 \label{boxesremoved}
\end{figure*} 
\end{example}

\begin{corollary}  When $k=2$, the multiplicity of $2\Lambda_0 - \gamma_\ell \in \max(2\Lambda_0) \cap P^+$ is the number of lattice paths in an $\ell \times \ell$ square that must stay below, but can touch the diagonal $y=x$.  
\end{corollary}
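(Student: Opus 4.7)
The plan is to apply Theorem \ref{PathsTh} directly with $k=2$, showing that under this specialization the admissibility conditions collapse to exactly the statement of the corollary. First I would observe that when $k=2$, an element of $\mc{T}_\ell^2$ consists of only $k-1 = 1$ lattice path $p_1$ drawn from the lower-left corner to the upper-right corner of the $\ell \times \ell$ square, using unit right and unit up steps.

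Next I would examine Definition \ref{pathsdef} under this specialization. The second condition applies only to indices $i$ with $3 \leq i \leq k-1$, which becomes the empty range $3 \leq i \leq 1$ when $k=2$; consequently the inequalities (2a) and (2b) impose no restriction whatsoever, and the quantities $t_i^j$ for $i \geq 3$ do not even arise. The only surviving constraint is condition (1): the single path $p_1$ must not cross the diagonal $y=x$. Thus $\mc{T}_\ell^2$ is precisely the set of lattice paths from $(0,0)$ to $(\ell,\ell)$ that stay weakly below (but may touch) the diagonal $y=x$.

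Theorem \ref{PathsTh} then identifies the multiplicity of $2\Lambda_0 - \gamma_\ell$ in $V(2\Lambda_0)$ with $|\mc{T}_\ell^2|$, which yields the claim. There is no genuine obstacle; the only point requiring care is verifying that the phrase \emph{stay below but can touch the diagonal} is exactly the content of Definition \ref{pathsdef}(1), which it is by construction. As a sanity check, this count is the Catalan number $C_\ell = \frac{1}{\ell+1}\binom{2\ell}{\ell}$, recovering Tsuchioka's result mentioned in the introduction.
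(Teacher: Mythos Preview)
Your proposal is correct and matches the paper's treatment: the corollary is stated there without proof, being an immediate specialization of Theorem~\ref{PathsTh} once one notes that for $k=2$ the range $3\le i\le k-1$ in Definition~\ref{pathsdef}(2) is empty, leaving only condition~(1) on the single path $p_1$. Your unpacking of this and the Catalan sanity check are exactly the intended reading.
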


Denote a permutation of $\{1, 2, \ldots n \}$ by a sequence $w=w_1w_2\ldots w_n$ indicating that $ 1 \mapsto w_1, 2 \mapsto w_2, \ldots , n \mapsto w_n$.  A $(j,j-1,\ldots,1)$-avoiding permutation is a permutation which does not have a decreasing subsequence of length $j$.  For example, $w = 1342$ is a 321-avoiding permutation because it has no decreasing subsequence of length three.  Now we have the following conjecture.

\begin{conjecture}\label{conjec}
The multiplicities of the maximal dominant weights $(k\Lambda_{0} -  \gamma_\ell)$ of the $\widehat{sl}(n)$-modules
$V(k\Lambda_0)$ are given by mult$_ {k \Lambda_{0}}(k\Lambda_{0} -  \gamma_\ell) = \left | \{ (k+1)(k)\ldots 21 \text{-avoiding permutations of } [\ell]\} \right |.$
\end{conjecture}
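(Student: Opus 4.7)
The plan is to leverage Theorem \ref{PathsTh}, which already gives $|B(k\Lambda_0)_{k\Lambda_0-\gamma_\ell}| = |\mc{T}_\ell^k|$, thereby reducing Conjecture \ref{conjec} to a purely combinatorial identity: the number of admissible sequences of $(k-1)$ lattice paths in an $\ell \times \ell$ square equals the number of permutations of $[\ell]$ avoiding the pattern $(k+1)k\cdots 21$. My strategy is a two-step translation, first recasting admissible path sequences as configurations of $k$ non-intersecting lattice paths in a rectangular region, and then bijecting those with permutations via the Robinson--Schensted--Knuth (RSK) correspondence.

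First I would reexamine the admissibility conditions of Definition \ref{pathsdef}. The inequality $t_i^j \leq t_{i-1}^j$ says that, color by color, the number of boxes between successive paths is weakly decreasing in $i$; the bound $t_i^j \leq \ell - |j| - t_2^j - \sum_{a=2}^{i-1} t_a^j$ controls the cumulative supply of each color; and the monotonicities $t_i^j \leq t_i^{j-1} \leq \cdots \leq t_i^0$ (with the mirror for $j<0$) are precisely the conditions ensuring that each diagram $Y_i$ produced by the bijection in Theorem \ref{PathsTh} is a genuine extended Young diagram. Combining these, I would show that $(p_1, \ldots, p_{k-1})$, together with the diagonal and the appropriate boundary of the square, can be interpreted as $k$ lattice paths sharing common endpoints that pairwise do not cross.

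Next I would apply the Lindstr\"om--Gessel--Viennot lemma, after a standard shift of the endpoints, to express $|\mc{T}_\ell^k|$ as a $k \times k$ determinant. Via the Jacobi--Trudi identity this determinant matches the number of pairs of standard Young tableaux of the same shape fitting inside a $k \times \ell$ rectangle. By the RSK correspondence together with Schensted's theorem on the longest decreasing subsequence, those pairs are in bijection with permutations of $[\ell]$ avoiding $(k+1)k\cdots 21$, completing the count.

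The hard part will be the first step, namely the faithful translation of the color-indexed, asymmetric admissibility conditions into a clean non-crossing statement on paths. These conditions mix the lattice-path structure with the underlying coloring of the square, and disentangling them is genuinely delicate: the $k=2$ case has only a single path to compare against the diagonal and is handled by the classical BJS bijection \cite{BJS} between Dyck paths and $321$-avoiding permutations, but extending that bijection to $k-1$ interacting paths is the crux of the conjecture. The multiplicity tables the authors have computed for $k>2$ provide strong numerical evidence that such a bijection exists, and once step one is in place, steps two and three are largely standard applications of Lindstr\"om--Gessel--Viennot and RSK.
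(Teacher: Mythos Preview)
The statement you are addressing is presented in the paper as a \emph{conjecture}, and the paper does not prove it for general $k$. What the paper actually establishes is only the case $k=2$: there $\mc{T}_\ell^{2}$ consists of single Dyck paths in an $\ell\times\ell$ square, and the authors invoke the explicit bijection of Billey--Jockusch--Stanley \cite{BJS} between such paths and $321$-avoiding permutations of $[\ell]$, thereby recovering Tsuchioka's Catalan-number result. For $k>2$ the paper offers only the numerical evidence of Table~\ref{multtab} and a pointer to partial results in \cite{J}.

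Your proposal is therefore not competing with a proof in the paper, because there is none; you are sketching an attack on an open problem. The later stages of your plan (Lindstr\"om--Gessel--Viennot giving a determinant, then RSK/Schensted identifying it with pairs of standard Young tableaux of height at most $k$, hence with $(k{+}1)k\cdots21$-avoiding permutations) are indeed standard and would finish the argument \emph{if} the first step went through. But that first step is exactly where the difficulty lives, and it is not merely delicate---it is unresolved. The admissibility conditions of Definition~\ref{pathsdef} are phrased in terms of the color multiplicities $t_i^j$, not in terms of the paths $p_1,\dots,p_{k-1}$ being weakly nested; and the proof of Theorem~\ref{PathsTh} shows that for $i\geq 3$ the diagram $Y_i$ is obtained by \emph{rearranging} colored boxes rather than by reading off a region bounded by two consecutive paths. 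So there is no evident way to convert the color-indexed inequalities into a clean non-crossing condition on $k$ paths with shifted endpoints suitable for LGV. You say as much yourself, calling this ``the crux of the conjecture''; that is an accurate assessment, and it means what you have written is a reasonable strategic outline whose essential step remains open, not a proof. In the $k=2$ case your route (via LGV/RSK) and the paper's route (via the direct BJS bijection) both collapse to classical Catalan combinatorics and agree.
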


By definition, $T_{\ell}^2$ is the set of all lattice paths in an $\ell \times \ell$ square that do not cross $y=x$. As shown in (\cite{BJS}, page 361), there is a bijection between the lattice paths in $T_{\ell}^2$ and  the set of 321-avoiding permutations of $\{1, 2, \ldots, \ell\}$ as follows.  Given  $p \in T_{\ell}^2$ we construct a $321$-avoiding permutation $w = w_1w_2\ldots w_\ell$ of $[\ell]=\{1,2, \ldots, \ell \}$.  As we traverse $p$ from $(0,0)$ to $(\ell, \ell)$, let $\{(v_1, j_1), (v_2, j_2), \ldots (v_r, j_r) \}$ be the coordinates at the top of each vertical move, not including $(\ell, \ell)$, which is at the top of the final vertical move.    We define $w_{j_i} = v_i + 1$ for $1 \leq i \leq r$.  The remaining $w_i$'s are defined by the unique map $\{1, 2, \ldots, \ell \} \setminus \{j_1, j_2, \ldots, j_r \} \mapsto \{1, 2, \ldots, \ell \} \setminus \{w_{j_1}, w_{j_2}, \ldots, w_{j_r} \}$ in increasing order.  It follows from the construction that $w$ is a 321-avoiding permutation.

Conversely,  let $w = w_{1}w_{2}\ldots w_{\ell}$ be a 321-avoiding permutation of $[\ell]$. Define $C_{i} = \{j \mid j > i, w_{j} < w_{i} \}, c_{i} = |C_{i}|,$ and $J = \{ j_{1} < j_{2} <  \cdots <  j_{r} \} = \{j \mid c_{j} > 0 \}$.  We define a path, $p \in T_{\ell}^2$, from  $(0,0)$ to $(\ell,\ell)$ by the moves given in Table \ref{pathrules}.
\begin{table}[h]
\caption{Rules for Drawing Lattice Path}
\label{pathrules}
\centering
\begin{tabular}{|c|c|c|} \hline
Direction & From & To \\ \hline
Horizontal & $(0,0)$ & $(c_{j_{1}} + j_{1} - 1, 0 )$ \\
Vertical  & $(c_{j_{1}} + j_{1} - 1, 0 )$ & $(c_{j_{1}} + j_{1} - 1,  j_{1})$ \\
Horizontal & $(c_{j_{1}} + j_{1} - 1,  j_{1})$ & $(c_{j_{2}} + j_{2} - 1, j_{1})$ \\
Vertical  & $(c_{j_{2}} + j_{2} - 1, j_{1})$ & $(c_{j_{2}} + j_{2} - 1, j_{2})$ \\
\vdots  &  \vdots  & \vdots \\
Horizontal & $(c_{j_{r}} + j_{r} - 1, j_{r})$ & $(\ell, j_{r})$ \\
Vertical & $(\ell, j_{r})$ & $(\ell, \ell)$ \\ \hline
\end{tabular}
\end{table}

In the following two examples we illustrate this bijection. Thus Conjecture \ref{conjec} is true for $k=2$. Furthermore, it is known (c.f. \cite{Sta}) that the number of 321-avoiding permutations  of $\{1,2, \ldots, \ell\}$ is equal to the $\ell^{th}$ Catalan number, which coincides with the result for the multiplicity of $2\Lambda_0 - \gamma_\ell$ in \cite{Tsu}.

\begin{example}  Let $\ell = 4$ and consider the 321-avoiding permutation $w=1342$.  We obtain the values for $c_i, i=1,2,3,4$ shown in Fig. \ref{1342topath}a.  Subsequently, we have the path coordinates in Fig. \ref{1342topath}b, giving the path shown in Fig. \ref{1342topath}c.

\begin{figure}[h]

\centering
\begin{tabular}{ccc}

\begin{tabular}{|c|c|c|} \hline
$i$  & $C_i$ & $c_i$ \\ \hline
1 & - & 0 \\ \hline
2 & 4 & 1 \\ \hline
3 & 4 & 1 \\ \hline
4 & 0 & 0 \\ \hline
\end{tabular}  

&

\begin{tabular}{|c|c|c|}
\hline
Direction & From & To \\
\hline
Horizontal & $(0,0)$ & $(2, 0 )$ \\
Vertical & $(2, 0 )$ & $(2,  2)$ \\
Horizontal & $(2,  2)$ & $(3, 2)$ \\
Vertical   & $(3, 2)$ & $(3, 3)$ \\
Horizontal & $(3,3)$ & $(4,3)$ \\
Vertical  & $(4,3)$ & $(4,4)$ \\
\hline 
\end{tabular}

  &
\begin{tabular}{c}
  \\
  \includegraphics{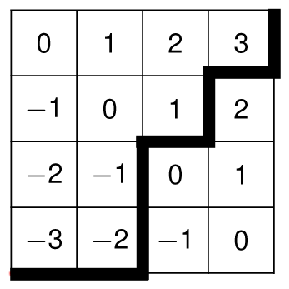} \\
\end{tabular}\\
a & b & c \\
\end{tabular} 
\caption{Data for the avoiding permutation 1342}
\label{1342topath}
\end{figure}
\end{example}

\begin{example}
Let $\ell=4$ and consider the admissible path in Fig. \ref{3142}a.  We wish to construct a 321-avoiding permutation $w = w_1w_2w_3w_4$ to correspond with the admissible path.  For each vertical move in the path,  we determine values  $w_i$ as in Fig. \ref{3142}b.   
We conclude that the path corresponds with the 321-avoiding permutation $w = 3142$.

\begin{figure}[h]
\centering
\begin{tabular}{cc}
\begin{tabular}{c}
\includegraphics{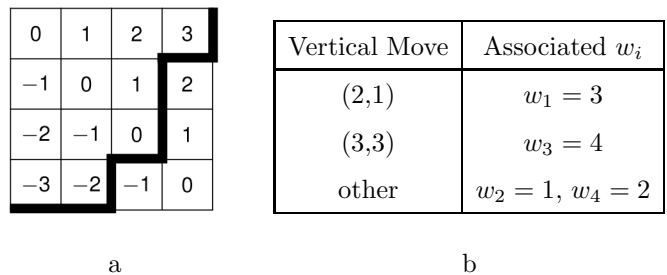}
\end{tabular}
& 
\begin{tabular}{|c|c|} \hline
Vertical Move  & Associated $w_i$   \\ \hline
(2,1) & $w_1 = 3$  \\
(3,3) &  $w_3 = 4$  \\
other & $w_2=1$, $w_4 = 2$ \\ \hline
\end{tabular} \\

a & b \\
\end{tabular}
\caption{Corresponding admissible path and avoiding permutation}
\label{3142}
\end{figure}

\end{example}

In Table \ref{multtab}, using Theorem \ref{PathsTh} we give the multiplicities  of maximal dominant weights  $k\Lambda_0 - \gamma_{\ell}$ for $k \leq 9$ and $\ell \leq 9$. We observe that these multiplicities coincide with the number of $(k+1)(k)\ldots 21$-avoiding permutations of $\{1,2, \ldots, \ell\}$.  For some partial results in the $k \geq 3$ case, see \cite{J}.


\newpage

\begin{table}[h]
\caption{Multiplicity Table for $k\Lambda_0 - \gamma_{\ell}$ }
\label{multtab}
\centering

\begin{tabular}{|  >{\centering\arraybackslash}m{1 cm}  |  >{\centering\arraybackslash}m{1cm}   |   >{\centering\arraybackslash}m{1.1cm}  |  >{\centering\arraybackslash}m{1.1cm}   |  >{\centering\arraybackslash}m{1.1cm}  |  >{\centering\arraybackslash}m{1.1cm}  |   >{\centering\arraybackslash}m{1.1cm}  |   >{\centering\arraybackslash}m{1.25cm}  | } \hline
$k$ & 3 & 4 & 5 & 6 & 7 & 8 & 9 \\ \hline
$\ell = 2 $ & 2 & 2 & 2 & 2 & 2 & 2 & 2  \\ \hline
$\ell = 3 $ & 6 & 6 & 6 & 6 & 6 & 6 & 6  \\ \hline
$\ell = 4$ & 23 & 24 & 24 & 24 & 24 & 24 & 24  \\ \hline
$\ell = 5$ & 103 & 119 & 120 & 120 & 120 & 120 & 120  \\ \hline
$\ell = 6$ & 513 & 694 & 719 & 720 & 720 & 720 & 720 \\ \hline
$\ell = 7$ & 2761 &4582 & 5003 & 5039 & 5040 & 5040 & 5040\\ \hline
$\ell = 8$ & 15767 & 33324 & 39429 & 40270 & 40319 & 40320 & 40320\\ \hline
$\ell = 9$ & 94359 & 261808 & 344837 & 361302 & 362815 & 362879 & 362880\\ \hline
$\ell = 10$ & 586590 & 2190688 & 3291590 & 3587916 & 3626197 & 3628718 & 36228799 \\ \hline
\end{tabular}

\end{table}

%
%
%
%
%
%
%
%
%

\end{document}